\documentclass[12pt]{amsart}
\pdfoutput=1
\newif\iffinal
\finaltrue
\iffinal
\else
\usepackage[eso-foot]{svninfo}
\svnInfo $Id$
\fi

\usepackage{amsmath,amsfonts,amsthm,amssymb}
\usepackage{geometry}
\usepackage{stmaryrd}
\usepackage{hyperref}
\usepackage{tikz}
\usepackage{lineno}
\usepackage{pgfplots}
\pgfplotsset{compat=1.5}

\newcommand{\N}{\mathbb{N}}

\newcommand{\Prob}{\mathbb{P}}
\newcommand{\Z}{\mathbb{Z}}
\newcommand{\C}{\mathbb{C}}
\newcommand{\E}{\mathbb{E}}
\newcommand{\V}{\mathbb{V}}
\newcommand{\DLMF}[2]{\cite[\href{http://dlmf.nist.gov/#1.E#2}{#1.#2}]{NIST:DLMF}}

\DeclareMathOperator{\Res}{Res}

\theoremstyle{plain}
\newtheorem{theorem}{Theorem}
\newtheorem{corollary}{Corollary}[section]
\newtheorem{lemma}[corollary]{Lemma}
\newtheorem{proposition}[corollary]{Proposition}

\theoremstyle{definition}
\newtheorem{definition}[corollary]{Definition}

\renewcommand{\MR}[1]{}

\theoremstyle{remark}
\newtheorem*{remark}{Remark}

\begin{document}

\title[Ballot Sequences and Random Walks]{Analysis of Bidirectional Ballot Sequences and Random Walks Ending in their Maximum}
\author[B.~Hackl]{Benjamin Hackl}
\author[C.~Heuberger]{Clemens Heuberger}

\address[Benjamin Hackl, Clemens Heuberger]{Institut f\"ur Mathematik,
  Alpen-Adria-Uni\-ver\-si\-t\"at Klagenfurt, Universit\"atsstra\ss e
  65--67, 9020 Klagenfurt, Austria} 
\email{\href{mailto:benjamin.hackl@aau.at}{benjamin.hackl@aau.at}}
\email{\href{mailto:clemens.heuberger@aau.at}{clemens.heuberger@aau.at}}
\thanks{B.~Hackl and C.~Heuberger are supported by the Austrian
  Science Fund (FWF): P~24644-N26.}

\author[H.~Prodinger]{Helmut Prodinger}
\thanks{H.~Prodinger is supported by an incentive grant of the
  National Research Foundation of South Africa.}

\author[S.~Wagner]{Stephan Wagner}
\address[Helmut Prodinger, Stephan Wagner]{Department of Mathematical
  Sciences, Stellenbosch University, 7602 Stellenbosch, 
 South Africa}
\email{\href{mailto:hproding@sun.ac.za}{hproding@sun.ac.za}}
\email{\href{mailto:swagner@sun.ac.za}{swagner@sun.ac.za}}
\thanks{S.~Wagner is supported by the National Research Foundation of
  South Africa, grant number 70560.}

\iffinal\else
\linenumbers 
\fi
\begin{abstract}
Consider non-negative lattice paths ending at their maximum height, which will be called admissible paths. We show that the probability for a lattice path to be admissible is related to the Chebyshev polynomials of the first or second kind,
depending on whether the lattice path is defined with a reflective
barrier or not. Parameters like the number of admissible paths with given length
or the expected height are analyzed asymptotically. Additionally, we
use a bijection between admissible random walks and special binary
sequences to prove a recent conjecture by Zhao on ballot sequences.
\end{abstract}
\subjclass[2010]{05A16; 05A15, 05A10, 60C05}
\keywords{Lattice path; culminating paths; ballot sequence; asymptotic expansion; Chebyshev polynomial}
\maketitle

\section{Introduction}

Lattice paths as well as their stochastic incarnation---random walks---are interesting and classical objects of study. Several authors have investigated a variety of parameters related to lattice paths. For example, Banderier and Flajolet gave an
asymptotic analysis of the number of special lattice paths with fixed
length in \cite{Banderier-Flajolet:2002:lat-path}. De Brujin, Knuth,
and Rice \cite{Bruijn-Knuth-Rice:1972} analyzed the expected height of
certain lattice paths, and Panny and Prodinger
\cite{Panny-Prodinger:1985:path-height} determined the asymptotic behavior
of such paths with respect to several notions of height. 

The particular class of lattice paths we want to analyze in this paper is defined
as follows.

\begin{definition}[Admissible random walks and lattice paths]
  Let $(S_{k})_{0\leq k \leq n}$ be a simple symmetric random walk on
  $\N_{0}$ or $\Z$ of length $n$ starting at $0$. That is, we have
  $\Prob(S_{0}=0) = 1$ as well as
  \begin{align*}
    \Prob(S_k=j-1\mid S_{k-1}=j)=\Prob(S_k=j+1 \mid
    S_{k-1}=j)&=\frac{1}{2} \qquad\text{ for }j\ge 1,\\ 
    \Prob(S_k = 1 \mid S_{k-1}=0) & = 1,
  \end{align*}
  for random walks defined on $\N_{0}$, and
  \[ \Prob(S_k=j-1\mid S_{k-1}=j)=\Prob(S_k=j+1\mid
  S_{k-1}=j)=\frac{1}{2} \qquad\text{ for } j\in \Z \]
  for random walks on $\Z$. Then $(S_{k})_{0\leq k\leq n}$ is said to
  be \emph{admissible} of height $h$, if the random walk stays
  within the interval $[0, h]$ and ends in $h$, i.e.\ $S_{k}\in [0,h]$
  for all $0\leq k\leq n$ and $\Prob(S_{n} = h) = 1$. It is called
  \emph{admissible}, if it is admissible of any height $h \in \N$.

  The probability that a random walk of length $n$ is admissible of
  height $h$ is written as $p_{n}^{(h)}$ and $q_{n}^{(h)}$ for
  random walks on $\N_{0}$ and $\Z$, respectively. Furthermore, the
  probabilities that a random walk is admissible at all are defined as
  $p_{n} := \sum_{h\geq 0} p_{n}^{(h)}$ and $q_{n} := \sum_{h\geq 0}
  q_{n}^{(h)}$ respectively.

  Finally, an \emph{admissible lattice path} is a sequence of integers
  realizing an admissible random walk.
\end{definition}

In a nutshell, this means that an admissible random walk is a non-negative random walk ending in its maximum. The definition is also
visualized in Figure~\ref{fig:admis-LP}, where all admissible lattice
paths of length $5$ are depicted. There are three
admissible lattice paths of height $3$, and one of height $1$ and
$5$, respectively. Note that when considering random walks on $\Z$,
every lattice path has the same probability $2^{-n}$. Admissible
random walks on $\Z$ are enumerated by
sequence \href{http://oeis.org/A167510}{A167510} in \cite{OEIS:2015}.

\begin{figure}[ht]
  \centering
  \begin{tikzpicture}[scale=0.45]
    \draw[->, thick] (-0.5,0) -- (30.5,0);
    \draw[->, thick] (0,-0.5) -- (0,5.5);
    \foreach \x in {1,...,5} {
    \draw[dotted, gray] (0,\x) -- (30.5,\x);
    }
    \draw[->] (0,0) to (1,1);
    \draw[->] (1,1) to (2,2);
    \draw[->] (2,2) to (3,3);
    \draw[->] (3,3) to (4,4);
    \draw[->] (4,4) to (5,5);

    \draw[->] (6,0) to (7,1);
    \draw[->] (7,1) to (8,0);
    \draw[->] (8,0) to (9,1);
    \draw[->] (9,1) to (10,2);
    \draw[->] (10,2) to (11,3);
    
    \draw[->] (12,0) to (13,1); 
    \draw[->] (13,1) to (14,2);
    \draw[->] (14,2) to (15,1);
    \draw[->] (15,1) to (16,2);
    \draw[->] (16,2) to (17,3);

    \draw[->] (18,0) to (19,1); 
    \draw[->] (19,1) to (20,2); 
    \draw[->] (20,2) to (21,3); 
    \draw[->] (21,3) to (22,2); 
    \draw[->] (22,2) to (23,3); 

    \draw[->] (24,0) to (25,1);
    \draw[->] (25,1) to (26,0);
    \draw[->] (26,0) to (27,1);
    \draw[->] (27,1) to (28,0);
    \draw[->] (28,0) to (29,1);

  \end{tikzpicture}
  \caption{Admissible lattice paths of length $5$}
  \label{fig:admis-LP}
\end{figure}
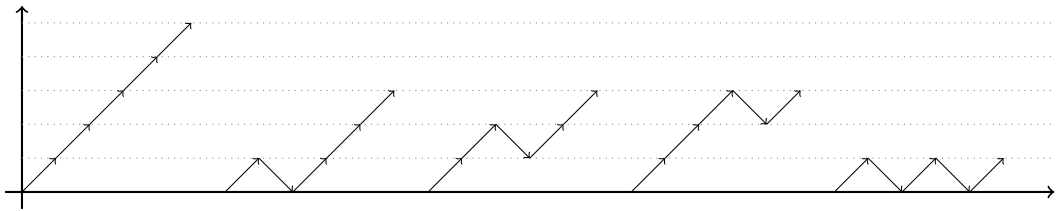

However, in the case of random walks on $\N_{0}$, the probability
depends on the number of visits to $0$: if there are $v$ such visits
(including the initial state), then the path occurs with probability
$2^{-n+v}$. Note that by ``folding down'' (i.e., reflecting about the $x$-axis) some sections between consecutive
visits to $0$, or the section between the last visit and the end, $2^{v}$
lattice paths on $\Z$ can be formed, where the random walk is never farther
away from the start than at the end. We will call such lattice paths
\emph{extremal lattice paths}---and by construction, the number of
extremal lattice paths of length $n$ is given by $p_{n} 2^{n}$. To
illustrate this idea of extremal lattice paths, all paths of this form
of length $3$ are given in Figure~\ref{fig:extremal-LP}.

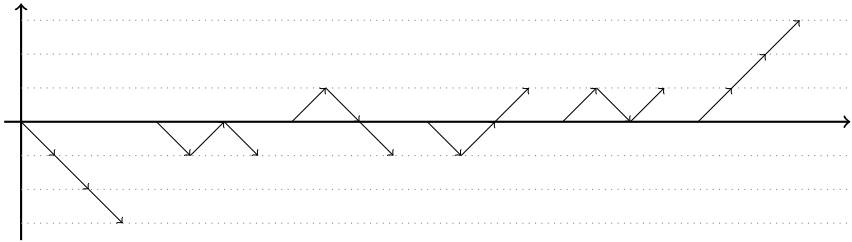
\begin{figure}[ht]
  \centering
  \begin{tikzpicture}[scale=0.45]
    \draw[->, thick] (-0.5,0) -- (24.5,0);
    \draw[->, thick] (0,-3.5) -- (0,3.5);
    \foreach \x in {-3,-2,-1,1,2,3} {
      \draw[dotted, gray] (0,\x) -- (24.5,\x);
    }
    
    \draw[->] (0,0) to (1,-1);
    \draw[->] (1,-1) to (2,-2);
    \draw[->] (2,-2) to (3,-3);

    \draw[->] (4,0) to (5,-1);
    \draw[->] (5,-1) to (6,0);
    \draw[->] (6,0) to (7,-1);
    
    \draw[->] (8,0) to (9,1); 
    \draw[->] (9,1) to (10,0);
    \draw[->] (10,0) to (11,-1);

    \draw[->] (12,0) to (13,-1); 
    \draw[->] (13,-1) to (14,0); 
    \draw[->] (14,0) to (15,1); 

    \draw[->] (16,0) to (17,1);
    \draw[->] (17,1) to (18,0);
    \draw[->] (18,0) to (19,1);

    \draw[->] (20,0) to (21,1);
    \draw[->] (21,1) to (22,2);
    \draw[->] (22,2) to (23,3);
  \end{tikzpicture}
  \caption{Extremal lattice paths of length $3$}
  \label{fig:extremal-LP}
\end{figure}

One of our motivations for investigating admissible random walks
originates from a conjecture in \cite{Zhao:2010:const-mstd}. There, Zhao
introduced the notion of a \emph{bidirectional ballot sequence}:

\begin{definition}[{\cite[Definition~3.1]{Zhao:2010:const-mstd}}]
  A $0$-$1$ sequence is called a \emph{bidirectional ballot
    sequence} if every prefix and suffix contains strictly more $1$'s than
  $0$'s. The number of bidirectional ballot sequences of length $n$ is denoted
  by $B_n$.
\end{definition}

Bidirectional ballot sequences are strongly related to admissible
random walks on $\Z$. In fact, every bidirectional ballot sequence
of length $n+2$ bijectively corresponds to an admissible random walk
of length $n$ on $\Z$: given an admissible random walk, every up-step
corresponds to a $1$, and down-steps correspond to $0$. Adding a $1$
both at the beginning and at the end of the constructed string gives a
bidirectional ballot sequence of length $n+2$.

Therefore, bidirectional ballot walks may also be
seen as lattice paths with unique minimum and maximum.

While we restrict ourselves to simple lattice paths (i.e.\ the path has
steps $\pm 1$), Bousquet-M\'elou and Ponty introduce a more general class of
so-called \emph{culminating paths} in \cite{Bousquet-Ponty:2008:culminating}. Akin to
bidirectional ballot walks, culminating paths are lattice paths with unique minimum and
maximum---however, the lattice path has steps $a$ and $-b$ for fixed $a$, $b > 0$. They
show that the behavior of these paths strongly depends on the drift $a - b$. In
particular, for $a = b = 1$ (i.e.\ for bidirectional ballot walks) they determine
the main term of the asymptotic expansion of $B_{n}$ (cf.\ \cite[Proposition
4.1]{Bousquet-Ponty:2008:culminating}).

In \cite{Zhao:2010:const-mstd}, Zhao also shows that
$B_n=\Theta(2^n/n)$, states (without detailed proof) that $B_n\sim
2^n/(4n)$ and conjectures that 
\begin{equation*}
  \frac{B_n}{2^n}=\frac{1}{4n}+\frac{1}{6n^2}+O\Big(\frac{1}{n^3}\Big).
\end{equation*}

In this paper, we want to give a detailed analysis of the asymptotic
behavior of admissible random walks. By exploiting the bijection
between admissible random walks and bidirectional ballot sequences,
we also prove a stronger version of Zhao's conjecture.

In order to do so, we use a connection between
Chebyshev polynomials and the probabilities
$p_{n}^{(h)}$ and $q_{n}^{(h)}$
(cf.~Proposition~\ref{proposition:random-walk-U} and
Proposition~\ref{proposition:random-walk-T}, respectively), which we
explore in detail in Section~\ref{sec:cheby}. This allows us to determine 
explicit representations of the probabilities $p_{n}$ and
$q_{n}$, which are given in Theorem~\ref{theorem:prob-explicit}. The
analysis of the asymptotic behavior of admissible random 
walks of given length shall focus in particular on the height of these random walks. In this context, we define random variables $H_{n}$ and
$\widetilde H_{n}$ by
\[ \Prob(H_{n} = h) := \frac{p_{n}^{(h)}}{p_{n}},\qquad \Prob(\widetilde
H_{n} = h) := \frac{q_{n}^{(h)}}{q_{n}}.  \]
These random variables model the height of admissible random walks
on $\N_{0}$ and $\Z$, respectively. Besides an asymptotic expansion
for $p_{n}$ and $q_{n}$, we are also interested in the behavior of the
expected height and its variance. The asymptotic analysis of these
expressions, which is based on an approach featuring the Mellin
transform, is carried out in Section~\ref{sec:RW-N} and
Section~\ref{sec:RW-Z}, and the results are given in
Theorem~\ref{thm:asy-NN} and Theorem~\ref{thm:asy-ZZ},
respectively. Finally, Zhao's conjecture is proved in
Corollary~\ref{cor:zhao}.

\section{Chebyshev Polynomials and Random Walks}\label{sec:cheby}

We denote the Chebyshev polynomials of the first and second kind by 
$T_h$ and $U_h$, respectively, i.e.,
\begin{align*}
  T_{h+1}(x)&=2x T_h(x)-T_{h-1}(x) &&\text{ for $h\ge 1$},&T_0(x)&=1,&T_1(x)&=x,\\
  U_{h+1}(x)&=2x U_h(x)-U_{h-1}(x) &&\text{ for $h\ge 1$},&U_0(x)&=1,&U_1(x)&=2x.
\end{align*}

In the following propositions, we show that these polynomials occur
when analyzing admissible random walks. As usual, the notation $[z^{n}]f(z)$ denotes the
coefficient of $z^{n}$ in the series expansion of $f(z)$.

\begin{proposition}\label{proposition:random-walk-U}
  The probability that a simple symmetric random walk $(S_{k})_{0\leq
    k\leq n}$ of length $n$ on $\Z$ is admissible of height $h$ is
  \begin{equation}\label{eq:U-probability}
    q_{n}^{(h)} = \Prob( 0\le S_0, S_1, \ldots, S_n \le h \text{ and }S_n=h )=2 [z^{n+1}] \frac{1}{U_{h+1}(1/z)}
  \end{equation}
  for $h\ge 0$ and $n\ge 0$.
\end{proposition}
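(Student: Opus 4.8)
The plan is to turn the probability into a count, encode that count with a transfer matrix, and read off the generating function as an entry of the resolvent $(I-zA)^{-1}$. Since every path of length $n$ on $\Z$ has probability $2^{-n}$, we have $q_{n}^{(h)} = 2^{-n} N_{n}^{(h)}$, where $N_{n}^{(h)}$ is the number of integer sequences $0 = s_0, s_1, \dots, s_n$ with $\lvert s_i - s_{i-1}\rvert = 1$ for all $i$, with $0 \le s_i \le h$ for all $i$, and with $s_n = h$. Such sequences are precisely the walks of length $n$ from the vertex $0$ to the vertex $h$ in the path graph on the vertex set $\{0, 1, \dots, h\}$. Hence, writing $A$ for the $(h+1)\times(h+1)$ tridiagonal adjacency matrix of that graph (zeros on the main diagonal, ones on the two adjacent diagonals) and $I$ for the identity matrix of the same size, we have $N_{n}^{(h)} = (A^n)_{0,h}$, so that
\[
  \sum_{n \ge 0} N_{n}^{(h)} z^n = \bigl((I - zA)^{-1}\bigr)_{0,h}
\]
as a formal power series in $z$.

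To evaluate this entry I would apply Cramer's rule. The numerator is $(-1)^{h}$ times the minor of $I - zA$ obtained by deleting the row indexed by $h$ and the column indexed by $0$; that minor is the determinant of a lower triangular matrix all of whose diagonal entries equal $-z$, hence it equals $(-1)^{h}(-z)^{h} = z^{h}$. The denominator is $\det(I - zA)$. If $E_m$ denotes this determinant when $A$ is taken to be the $m\times m$ path adjacency matrix, then expanding along the last row yields the three-term recurrence $E_m = E_{m-1} - z^2 E_{m-2}$ together with $E_0 = E_1 = 1$. The sequence $z^m U_m(1/(2z))$ obeys the very same recurrence — this is just the defining recurrence of $U_m$ evaluated at $x = 1/(2z)$ and multiplied by $z^{m+1}$ — and it agrees with $E_m$ at $m = 0$ and $m = 1$, so $\det(I - zA) = z^{h+1} U_{h+1}(1/(2z))$. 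Putting the two pieces together gives
\[
  \sum_{n \ge 0} N_{n}^{(h)} z^n = \frac{z^{h}}{z^{h+1} U_{h+1}(1/(2z))} = \frac{1}{z\, U_{h+1}(1/(2z))}.
\]

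It remains to restore the factor $2^{-n}$: replacing the formal variable $z$ by $z/2$ multiplies the coefficient of $z^n$ by $2^{-n}$, so
\[
  q_{n}^{(h)} = 2^{-n} N_{n}^{(h)} = [z^{n}]\,\frac{1}{(z/2)\, U_{h+1}(1/z)} = 2\,[z^{n}]\,\frac{1}{z\, U_{h+1}(1/z)} = 2\,[z^{n+1}]\,\frac{1}{U_{h+1}(1/z)},
\]
which is \eqref{eq:U-probability}. I expect the substantive points to be mostly bookkeeping: keeping track of the powers of $z$ and of the factor $2$ coming from the probability normalization, and observing that although $U_{h+1}(1/z) = 2^{h+1} z^{-(h+1)} + \cdots$ is a Laurent polynomial, its reciprocal $1/U_{h+1}(1/z)$ is a genuine power series whose lowest-order term is a constant multiple of $z^{h+1}$, so that every coefficient extraction above is well defined (and, in particular, correctly vanishes for $n < h$). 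A matrix-free variant of the same argument instead solves the linear system $F_0 = 1 + \tfrac{z}{2} F_1$, $F_j = \tfrac{z}{2}(F_{j-1} + F_{j+1})$ for $1 \le j \le h-1$, and $F_h = \tfrac{z}{2} F_{h-1}$ for the generating functions $F_j(z)$ counting strip-confined walks from $0$ to height $j$; its homogeneous solutions are once more governed by the Chebyshev three-term recurrence, which is where $U_{h+1}$ enters.
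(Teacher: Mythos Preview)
Your argument is correct and follows essentially the same route as the paper: transfer matrix, Cramer's rule for the corner entry of the resolvent, and identification of the tridiagonal determinant with a Chebyshev polynomial via the three-term recurrence. The only cosmetic difference is that the paper builds the factor $\tfrac12$ directly into the transfer matrix $M_h=\tfrac12 A$ and works with the probability generating function from the start, whereas you first count walks with the unweighted adjacency matrix $A$ and restore the factor $2^{-n}$ by the substitution $z\mapsto z/2$ at the end; the two computations match line for line once this rescaling is accounted for.
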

\begin{proof}  
  We consider the $(h+1)\times(h+1)$ transfer matrix
  \begin{equation*}
    M_h=
    \begin{pmatrix}
      0&\frac12&0&\cdots&\cdots&0\\
      \frac12&0&\frac12&\cdots&\cdots&0\\
      0&\frac12&0&\ddots&&0\\
      \vdots&\vdots&\ddots&\ddots&\ddots&\vdots\\
      \vdots&\vdots & & \ddots &   \ddots  &\frac{1}{2} \\
      0&0&0&\ldots&\frac{1}{2}&0
    \end{pmatrix},
  \end{equation*}
which has the following simple yet useful property: if $w_{n,k}^{(h)}$ is the probability that $0 \leq S_0,S_1,\ldots,S_n \leq h$ and $S_n = k$, then the following recursion for the vectors $w_n^{(h)} = (w_{n,0}^{(h)},w_{n,1}^{(h)},\ldots,w_{n,h}^{(h)})$ holds:
$$w_n^{(h)} \cdot M_h = w_{n+1}^{(h)},$$
so $w_n^{(h)} = w_0^{(h)} \cdot M_h^n$. The initial vector is $w_0^{(h)} = e_0=(1,0,\ldots,0)$. Since we also want that $S_n = h$, we multiply by the vector $e_h =(0,\ldots,0,1)^\top$ at the end to extract only the last entry $w_{n,h}^{(h)}$. This yields the generating function
  \begin{equation*}
    \sum_{n\ge 0}q_{n}^{(h)}z^n=\sum_{n\ge 0}e_0 M_h^n e_h z^n = e_0 (I-zM_h)^{-1} e_h.
  \end{equation*}
  Cramer's rule yields
  \[ \sum_{n\geq 0} q_{n}^{(h)} z^{n} = \frac{z^{h} 2^{-h}}{\det(I -
    zM_{h})}.  \]
 The determinant of $I-zM_h$ can be computed recursively in $h$ by
  means of row expansion, see (for instance) \cite[p.97]{Aigner:2007:course-enumeration}:
  \[ \det(I - z M_{h+2}) = \det(I - zM_{h+1}) - \frac{z^{2}}{4} \det(I
  - zM_{h}).\] 
  Comparing this with the recursion for the Chebyshev polynomials and checking the initial values, we find that $\frac{2^{h+1}
    \det(I - zM_{h})}{z^{h+1}} = U_{h+1}(1/z)$. Therefore, we obtain
  \begin{equation*}
    \sum_{n\ge 0}q_{n}^{(h)}z^n=\frac{2}{zU_{h+1}(1/z)}
  \end{equation*}
  from which \eqref{eq:U-probability} follows by extracting the coefficient of $z^n$.
\end{proof}

An analogous statement holds for admissible random walks on $\N_{0}$
with the sole difference that in this case, the Chebyshev polynomials
of the first kind occur.

\begin{proposition}\label{proposition:random-walk-T}
  The probability that a random walk $(S_{k})_{0\leq k \leq n}$ of
  length $n$ on $\N_{0}$ is admissible of height $h$ is given by
  \begin{equation}\label{eq:T-probability}
    p_{n}^{(h)} = \Prob( 0\le S_0, S_1, \ldots, S_n \le h \text{ and }S_n=h )=2 [z^{n+1}] \frac{1}{T_{h+1}(1/z)}
  \end{equation}
  for $h\ge 0$ and $n\ge 1$. 
\end{proposition}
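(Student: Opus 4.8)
The plan is to mimic the proof of Proposition~\ref{proposition:random-walk-U}, but with a transfer matrix adapted to the reflecting barrier at $0$ on $\N_0$. The key difference is the step probability $\Prob(S_k=1\mid S_{k-1}=0)=1$, which means that from state $0$ the walk moves deterministically to state $1$. So I would work with the $(h+1)\times(h+1)$ matrix
\[
 N_h=\begin{pmatrix}
 0&1&0&\cdots&0\\
 \tfrac12&0&\tfrac12&\cdots&0\\
 0&\tfrac12&0&\ddots&\vdots\\
 \vdots&&\ddots&\ddots&\tfrac12\\
 0&0&\cdots&\tfrac12&0
 \end{pmatrix},
\]
which differs from $M_h$ only in the top-left $2\times2$ block. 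Exactly as before, if $w_{n,k}^{(h)}$ denotes the probability that $0\le S_0,\dots,S_n\le h$ and $S_n=k$, then $w_n^{(h)}\cdot N_h=w_{n+1}^{(h)}$ with $w_0^{(h)}=e_0$, so that $\sum_{n\ge0}p_n^{(h)}z^n=e_0(I-zN_h)^{-1}e_h$. Cramer's rule then gives $\sum_{n\ge0}p_n^{(h)}z^n=(\text{some power of }z)\cdot(\text{product of entry constants})/\det(I-zN_h)$.

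Next I would compute $\det(I-zN_h)$ by row (or column) expansion. Expanding along the first row, the term involving the entry $1$ in position $(0,1)$ of $N_h$ contributes a $2\times2$ sub-block with a $\tfrac12$ rather than the usual structure; carrying this out, one finds a recursion of the form $\det(I-zN_{h+1})=\det(I-zN_h)-\tfrac{z^2}{4}\det(I-zN_{h-1})$ for $h$ large enough, i.e.\ the \emph{same} Chebyshev-type recursion as in the $\Z$ case, but with different initial values because of the perturbed corner. Concretely I expect $\det(I-zN_1)=1$ (since $N_1$ is a $2\times2$ nilpotent-ish matrix whose only nonzero entries are the off-diagonal $1$ and $\tfrac12$, whose product is $\tfrac12 z^2$... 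I would double-check this small case directly) and $\det(I-zN_0)=1$, and then match against $T_{h+1}(1/z)$ using $T_0=1$, $T_1=x$, and the recursion $T_{h+1}(x)=2xT_h(x)-T_{h-1}(x)$. After rescaling by the appropriate power $2^{h}/z^{h+1}$ or similar, the identification $\frac{2^{?}\det(I-zN_h)}{z^{?}}=T_{h+1}(1/z)$ should drop out, and then $\sum_{n\ge0}p_n^{(h)}z^n=\frac{2}{zT_{h+1}(1/z)}$, from which \eqref{eq:T-probability} follows by extracting the coefficient of $z^n$. The hypothesis $n\ge1$ (rather than $n\ge0$) is presumably needed because of a discrepancy in the constant term: for $n=0$ the formula would predict $p_0^{(h)}=2[z^1]1/T_{h+1}(1/z)$, which need not match the actual value (the length-$0$ walk sits at $0$, so it is admissible only of height $0$), and I would note this boundary effect explicitly.

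The main obstacle is getting the small cases and the normalisation constant exactly right: the reflecting barrier perturbs the corner of the matrix, so one must be careful that the recursion for $\det(I-zN_h)$ really is the Chebyshev-$T$ recursion and that the initial conditions $T_0=1,\ T_1=x$ line up after the rescaling --- an off-by-one in $h$ or a stray factor of $2$ here would propagate into the final formula. I would pin this down by verifying $p_n^{(1)}$ and $p_n^{(2)}$ against $T_2(1/z)=2/z^2-1$ and $T_3(1/z)=4/z^3-3/z$ for small $n$ (e.g.\ $p_1^{(1)}=1$, which should equal $2[z^2]\tfrac{1}{2/z^2-1}=2[z^2]\tfrac{z^2}{2-z^2}=1$, a good sanity check). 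Once the determinant identity is established, the rest is a verbatim repetition of the previous proof.
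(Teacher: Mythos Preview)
Your proposal is correct and follows essentially the same approach as the paper: the paper uses exactly the perturbed transfer matrix you call $N_h$ (denoted $\widetilde M_h$ there), applies Cramer's rule to obtain $\sum_{n\ge0}p_n^{(h)}z^n=\dfrac{z^h 2^{1-h}}{\det(I-z\widetilde M_h)}$, derives the same three-term recursion for the determinant by row expansion, and then identifies $\dfrac{2^{h-1}\det(I-z\widetilde M_{h-1})}{z^h}=T_h(1/z)$ by matching initial values. Your one slip is the guess $\det(I-zN_1)=1$; in fact $\det(I-zN_1)=1-\tfrac{z^2}{2}$, consistent with $T_2(1/z)=2/z^2-1$, but since you flagged this for direct verification it does not affect the soundness of the plan.
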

\begin{proof}
  For random walks with a reflective barrier at $0$, the $(h+1)\times
  (h+1)$ transfer matrix has the form
  \[ \widetilde M_{h}=
  \begin{pmatrix}
    0& 1 &0&\cdots&\cdots&0\\
    \frac12&0&\frac12&\cdots&\cdots&0\\
    0&\frac12&0&\ddots&&0\\
    \vdots&\vdots&\ddots&\ddots&\ddots&\vdots\\
    \vdots&\vdots & & \ddots &   \ddots  &\frac{1}{2} \\
    0&0&0&\ldots&\frac{1}{2}&0
  \end{pmatrix}. \]
  By the same approach involving Cramer's rule as in the proof of
  Proposition~\ref{proposition:random-walk-U}, we find the generating
  function
  \[ \sum_{n\geq 0} p_{n}^{(h)} z^{n} = e_{0} (I - z\widetilde
  M_{h})^{-1} e_{h} = \frac{z^{h} 2^{1-h}}{\det(I-z\widetilde
    M_{h})}, \]
  where we have the recursion
  \[ \det(I - z\widetilde M_{h+2}) = \det(I - z\widetilde M_{h+1}) -
  \frac{z^{2}}{4} \det(I - z\widetilde M_{h})  \]
  for the determinant of $I-z\widetilde M_{h}$. Finally,
  (\ref{eq:T-probability}) follows from $\frac{2^{h-1} \det(I-z\widetilde
    M_{h-1})}{z^{h}} = T_{h}(1/z)$, which can be proved again by verifying that
the same recursion holds for the Cheby\-shev-$T$ polynomials and that the initial values agree.
\end{proof}
\begin{remark}
  The coefficients of $\frac{1}{T_{h}(1/z)}$ have also been studied in
  \cite{Jiu-Moll-Vignat:2014:euler-poly}. There, the case of fixed $h$
  is investigated, whereas we mostly focus on the asymptotic behavior of
  $\sum_{h\geq 0} p_{n}^{(h)}$ for $n\to\infty$.
\end{remark}

Using the results from Proposition~\ref{proposition:random-walk-U} and
Proposition~\ref{proposition:random-walk-T}, we may give explicit representations
of the probabilities $p_{n}^{(h)}$ and $q_{n}^{(h)}$ by investigating
the Chebyshev polynomials thoroughly.

\begin{remark}[Iverson's notation]
  We use the Iversonian notation 
$$\llbracket \mathit{expr} \rrbracket = \begin{cases} 1& \text{if $\mathit{expr}$ is true,} \\ 0 & \text{otherwise,}\end{cases}$$
 popularized in \cite[Chapter 2]{Graham-Knuth-Patashnik:1994}.
\end{remark}

In the following theorem and throughout the rest of the paper, $m$ will denote a half-integer, i.e., $m \in \frac12 \N = \{\frac12, 1, \frac32, 2, \ldots\}$. While this convention may seem unusual, it simplifies many of our formulas and is therefore convenient for calculations.

\begin{theorem}\label{theorem:prob-explicit}
  With $\tau_{h,k} := (h+1)(2k+1)/2$ and $\upsilon_{h,k} :=
  (h+2)(2k+1)/2$, we have
  \begin{align}
    p_{2m-1}^{(h)} &= \frac{4}{4^{m}} \sum_{k\geq 0} (-1)^{k}
    \frac{\tau_{h,k}}{m} \binom{2m}{m-\tau_{h,k}} \cdot \llbracket h+1
    \equiv 2m \bmod 2 \rrbracket, \label{eq:prob-expl-T}\\
    q_{2m-2}^{(h)} &= \frac{4}{4^{m}} \sum_{k\geq 0} \frac{2
                     \upsilon_{h,k}^{2} - m}{(2m-1) m}
                     \binom{2m}{m-\upsilon_{h,k}} \cdot \llbracket h
                     \equiv 2m \bmod 2\rrbracket \label{eq:prob-expl-U}
  \end{align}
  for $h \geq 0$ and half-integers $m\in \frac{1}{2} \N$ with $m\geq 1$.
\end{theorem}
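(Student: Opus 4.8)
The plan is to feed the generating functions of Propositions~\ref{proposition:random-walk-U} and~\ref{proposition:random-walk-T} into a rationalising substitution that turns the Chebyshev denominators into plain geometric series, to read off coefficients by Lagrange inversion, and only at the very end to convert the resulting binomial sums into the stated shape. First I would substitute $\frac1z=\frac12(w+w^{-1})$, i.e.\ $z=\frac{2w}{1+w^2}$, so that $w=w(z)=\frac{1-\sqrt{1-z^2}}{z}=\frac z2+O(z^3)$ is a well-defined formal power series in $z$. Using the classical evaluations $U_{h+1}\bigl(\tfrac{w+w^{-1}}{2}\bigr)=\frac{w^{h+2}-w^{-(h+2)}}{w-w^{-1}}$ and $T_{h+1}\bigl(\tfrac{w+w^{-1}}{2}\bigr)=\frac{w^{h+1}+w^{-(h+1)}}{2}$, the generating functions become rational in $w$:
\[
  \sum_{n\ge0}q_n^{(h)}z^n=\frac{(w^4-1)w^h}{w^{2h+4}-1},\qquad
  \sum_{n\ge0}p_n^{(h)}z^n=\frac{2(1+w^2)w^h}{w^{2h+2}+1}.
\]
Expanding $\frac{1}{1-w^{2h+4}}$ and $\frac{1}{1+w^{2h+2}}$ as geometric series in $w$ and collecting powers, these equal
\[
  \sum_{k\ge0}\bigl(w^{2\upsilon_{h,k}-2}-w^{2\upsilon_{h,k}+2}\bigr)
  \quad\text{and}\quad
  2\sum_{k\ge0}(-1)^k\bigl(w^{2\tau_{h,k}-1}+w^{2\tau_{h,k}+1}\bigr),
\]
since $h+(2h+4)k=(h+2)(2k+1)-2=2\upsilon_{h,k}-2$ and $h+(2h+2)k=(h+1)(2k+1)-1=2\tau_{h,k}-1$; for fixed $n$ only finitely many summands contribute, so no convergence issue arises.

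Next I would extract coefficients term by term. Lagrange inversion applied to $w=z\cdot\frac{1+w^2}{2}$ gives, for $n\ge1$,
\[
  [z^n]w^\ell=\frac\ell n\,[w^{n-\ell}]\Bigl(\tfrac{1+w^2}{2}\Bigr)^n=\frac{\ell}{n\,2^n}\binom{n}{(n-\ell)/2},
\]
with the convention that the binomial coefficient is $0$ unless $n\equiv\ell\pmod 2$ and $0\le\ell\le n$. Applying this with $n=2m-1$ for \eqref{eq:prob-expl-T} and $n=2m-2$ for \eqref{eq:prob-expl-U} (the degenerate case $m=1$ of \eqref{eq:prob-expl-U}, which simply reads $q_0^{(h)}=\llbracket h=0\rrbracket$, being checked directly), and observing that the parity requirement $n\equiv h\pmod 2$ on the occurring exponents is precisely what the Iverson brackets $\llbracket h+1\equiv 2m\bmod 2\rrbracket$ and $\llbracket h\equiv 2m\bmod 2\rrbracket$ encode, one arrives at
\begin{align*}
  p_{2m-1}^{(h)}&=\frac{4}{(2m-1)4^m}\sum_{k\ge0}(-1)^k\Bigl((2\tau_{h,k}-1)\binom{2m-1}{m-\tau_{h,k}}+(2\tau_{h,k}+1)\binom{2m-1}{m-\tau_{h,k}-1}\Bigr),\\
  q_{2m-2}^{(h)}&=\frac{4}{(2m-2)4^m}\sum_{k\ge0}\Bigl((2\upsilon_{h,k}-2)\binom{2m-2}{m-\upsilon_{h,k}}-(2\upsilon_{h,k}+2)\binom{2m-2}{m-\upsilon_{h,k}-2}\Bigr)
\end{align*}
(each multiplied by the corresponding Iverson bracket from the statement).

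Finally I would rewrite these sums with two elementary binomial identities,
\[
  (2\tau-1)\binom{2m-1}{m-\tau}+(2\tau+1)\binom{2m-1}{m-\tau-1}=\frac{(2m-1)\tau}{m}\binom{2m}{m-\tau},
\]
\[
  (2\upsilon-2)\binom{2m-2}{m-\upsilon}-(2\upsilon+2)\binom{2m-2}{m-\upsilon-2}=\frac{(2m-2)(2\upsilon^2-m)}{(2m-1)m}\binom{2m}{m-\upsilon},
\]
both of which follow by expressing the lower-index binomials as rational multiples of $\binom{2m}{m-\tau}$ (respectively $\binom{2m}{m-\upsilon}$) via $\binom{N}{j-1}=\frac{j}{N+1-j}\binom{N}{j}$ and simplifying; the second one rests on the factorisation $(\upsilon-1)(m+\upsilon)(m+\upsilon-1)-(\upsilon+1)(m-\upsilon)(m-\upsilon-1)=2(m-1)(2\upsilon^2-m)$. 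After substitution the prefactors collapse to $\frac{4}{4^m m}$ and $\frac{4}{4^m(2m-1)m}$, which is exactly \eqref{eq:prob-expl-T} and \eqref{eq:prob-expl-U}.

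The parts that need genuine care are the formal-power-series justification of the substitution and of the geometric expansions, and the bookkeeping that matches the parity conditions after the index change $n\mapsto m$ to the claimed Iverson brackets (including the separate treatment of $m=1$ in \eqref{eq:prob-expl-U}, where Lagrange inversion is not directly available). I expect the two binomial identities in the last step—where the polynomial factors $\tau_{h,k}$ and $2\upsilon_{h,k}^2-m$ are finally pinned down—to be the only computation of substance, and none of it should be deep.
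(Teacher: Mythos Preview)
Your proof is correct and follows essentially the same route as the paper's: both rationalise the Chebyshev denominator via the substitution $z=\frac{2w}{1+w^2}$ (the paper writes it as $t=z^{2}=\frac{4u}{(1+u)^{2}}$ with $u=w^{2}$ and casts the coefficient extraction as a contour integral rather than naming it Lagrange inversion), expand the resulting $1/(1\pm w^{2h+2})$ as a geometric series, and finish with a binomial identity. Your packaging via Lagrange inversion is slightly more streamlined, but the underlying computation is the same.
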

\begin{proof}
  We begin with the analysis of $p_{n}^{(h)}$. The
  probabilities are related to the Chebyshev-$T$ polynomials by
  Proposition~\ref{proposition:random-walk-T}. It is a 
  well-known fact (cf.\ \cite[22:3:3]{Oldham-Spanier:1987:atlas-funct}) that these polynomials
  have the explicit  representation
  \begin{equation*}
    T_{h}(x) = \frac{(x - \sqrt{x^{2} - 1}\,)^{h} + (x + \sqrt{x^{2} -1}\,)^{h}}{2},
  \end{equation*}
  which immediately yields
  \begin{equation}\label{eq:chebyshev-T-def-Y}
  \frac{1}{T_{h}(1/z)} = z^{h} \frac{2}{(1 - \sqrt{1 -
      z^{2}}\,)^{h} + (1 + \sqrt{1 - z^{2}}\,)^{h}} =: z^{h} Y(z^{2}).
  \end{equation}
  By applying Cauchy's integral formula, we obtain the coefficients of
  the factor $Y(t)$ encountered in \eqref{eq:chebyshev-T-def-Y}. We
  choose a sufficiently small circle around $0$ as the integration
  contour $\gamma$. Thus, we get 
  \begin{align*}
    [t^{n}] Y(t) & = [t^{n}]\frac{2}{(1 - \sqrt{1 - t}\,)^{h} + (1
                   + \sqrt{1 - t}\,)^{h}} \\
                 & = \frac{1}{2\pi i}
                   \oint_{\gamma} \frac{2}{(1 - \sqrt{1-t}\,)^{h} + (1 +
                   \sqrt{1-t}\,)^{h}} \cdot \frac{1}{t^{n+1}}~dt.
  \end{align*}
  We want to simplify the expression $\sqrt{1-t}$ in this
  integral. This can be achieved by the substitution $t =
  \frac{4u}{(1+u)^{2}}$, which gives us $dt = (1-u)\cdot
  \frac{4}{(1+u)^{3}}~du$ and $\sqrt{1-t} =
  \frac{1-u}{1+u}$. Also, the new integration contour is
  $\tilde\gamma$, which is still a contour that winds around the
  origin once. Then, again by Cauchy's integral formula, we obtain
  \begin{align*}
    [t^{n}] Y(t) & = \frac{1}{2\pi i} \oint_{\tilde\gamma} (1-u)
                   \frac{(1+u)^{2n+h-1}}{2^{2n+h-1} (1+u^{h})}\cdot
                   \frac{1}{u^{n+1}}~du \\
                 & = [u^{n}] (1-u)
                   \frac{(1+u)^{2n+h-1}}{2^{2n+h-1} (1+u^{h})}.
  \end{align*}
Expanding the factor $\frac{(1+u)^{2n+h-1}}{1+u^{h}}$ into a series
with the help of the geometric series and the binomial theorem yields
\[ \frac{(1+u)^{2n+h-1}}{1 + u^{h}} = \sum_{k\geq 0} (-1)^{k}
u^{kh} (1 + u)^{2n+h-1} = \sum_{k\geq 0} (-1)^{k} u^{kh}
\sum_{j=0}^{2n+h-1} \binom{2n+h-1}{j} u^{j},  \]
and therefore
\[ [u^{\ell}]\frac{(1+u)^{2n+h-1}}{1+u^{h}} = \sum_{k\geq
  0} (-1)^{k} \binom{2n+h-1}{\ell - hk}.  \]
This allows us to expand the expression encountered before, that is
\begin{align*}
  [t^{n}] Y(t) & = [u^{n}] (1-u) \frac{(1+u)^{2n+h-1}}{2^{2n+h-1}
  (1+u^{h})} \\
               & = \frac{1}{2^{2n+h-1}} \sum_{k\geq
                      0}(-1)^{k} \left[\binom{2n+h-1}{n-hk} -
                      \binom{2n+h-1}{n-hk-1}\right].
\end{align*}
Using the binomial identity
\[ \binom{N-1}{\alpha} - \binom{N-1}{\alpha-1} = \frac{N-2\alpha}{N}
\binom{N}{\alpha}, \]
the expression above can be simplified so that, together
with~\eqref{eq:chebyshev-T-def-Y}, we find 
\begin{equation*}
  \frac{1}{T_{h}(1/z)} = 2 \sum_{n\geq 0}
  \left(\frac{z}{2}\right)^{2n+h} \sum_{k\geq 0} (-1)^{k}
  \frac{2hk + h}{2n+h} \binom{2n+h}{n-hk}.
\end{equation*}
By plugging this into \eqref{eq:T-probability}, we obtain
\begin{align*}
  p_{n}^{(h)} & = 2 [z^{n+1}]\frac{1}{T_{h+1}(1/z)}
  \\
              & =
  4 [z^{n+1}] \sum_{\ell\geq 0}
  \left(\frac{z}{2}\right)^{2\ell+h+1} \sum_{k\geq 0} (-1)^{k}
  \frac{2(h+1)k + h+1}{2\ell+h+1}\binom{2\ell+h+1}{\ell -
                (h+1)k}\\
  & = \frac{1}{2^{h-1}} [z^{n-h}] \sum_{\ell\geq 0}
    \left(\frac{z}{2}\right)^{2\ell} \sum_{k\geq 0} (-1)^{k}
    \frac{2(h+1)k + h + 1}{2\ell + h + 1} \binom{2\ell+h+1}{\ell-(h+1)k}.
\end{align*}
Combinatorially, it is clear that $p_{n}^{(h)} = 0$ for $n$
and $h$ of different parity, as only heights of the same
parity as the length can be reached by a random walk starting at the origin. This can
also be observed in the representation above. Assuming $n\equiv h\bmod
2$, we can write $n-h = 2\ell$ or equivalently
$\frac{n-h}{2} = \ell$. This gives us
\begin{align*} 
  p_{n}^{(h)} & = \frac{1}{2^{n-1}} \sum_{k\geq 0} (-1)^{k}
                \frac{2(h+1)k + h + 1}{n+1}
                \binom{n+1}{\frac{n-h}{2} - (h+1)k}\\
              & = \frac{1}{2^{n-1}} \sum_{k\geq 0} (-1)^{k}
                \frac{(h+1)(2k+1)}{n+1} \binom{n+1}{\frac{n+1}{2} -
                \frac{1}{2} (h+1)(2k+1)}.
\end{align*}
Substituting $n = 2m-1$ with a half-integer $m \in \frac{1}{2} \N$ such
that $h+1 \equiv 2m\bmod 2$, and recalling that $\tau_{h,k} =
(h+1)(2k+1)/2$, the representation in \eqref{eq:prob-expl-T} is
proved.

For the second part, we consider the explicit
representation
\begin{equation*}
  U_{h}(x) = \frac{(x + \sqrt{x^{2} - 1}\,)^{h+1} - (x - \sqrt{x^{2} -
      1}\,)^{h+1}}{2\sqrt{x^{2} - 1}}
\end{equation*}
of the Chebyshev-$U$ polynomials, which is equivalent to
\[ \frac{1}{U_{h}(1/z)} = z^{h} \frac{2 \sqrt{1 -
    z^{2}}}{(1+\sqrt{1-z^{2}}\,)^{h+1} - (1-\sqrt{1-z^{2}}\,)^{h+1}}. 
\]
Formula~\eqref{eq:prob-expl-U} is now obtained in the same way as~\eqref{eq:prob-expl-T}. 

\end{proof}

With explicit formulae for the probabilities $p_{n}^{(h)}$ and
$q_{n}^{(h)}$, we can start to work towards the analysis of
the asymptotic behavior of admissible random walks.

\section{Admissible Random Walks on \texorpdfstring{$\N_{0}$}{N}}\label{sec:RW-N}

In this section, we begin to develop the tools required for a precise
analysis of the asymptotic behavior of admissible random walks on
$\N_{0}$. 

Recalling the result of Theorem~\ref{theorem:prob-explicit},
we find that in the half-integer representation $p_{2m-1}^{(h)}$, the
shifted central binomial coefficient $\binom{2m}{m-\tau_{h,k}}$
appears. Hence, for the purpose of obtaining an expansion for
$p_{2m-1} = \sum_{h \geq 0} p_{2m-1}^{(h)}$, analyzing the asymptotics of binomial coefficients in the central region is necessary. In the following, we will work a lot with asymptotic expansions. The notation
$$f(n) \sim \sum_{\ell=-L}^{\infty} a_{\ell} n^{-\ell}$$
(as $n \to \infty$) is understood to mean
$$f(n) = \sum_{\ell=-L}^{R-1} a_{\ell} n^{-\ell} + O(n^{-R})$$
for all integers $R > -L$, even if the series does not converge. Likewise, an asymptotic expansion in two variables given by
$$f(\alpha,n) \sim \sum_{\ell=-L}^{\infty} \sum_{j=0}^{J(\ell)} b_{\ell j} \frac{\alpha^j}{n^{\ell}}$$
is to be understood as
$$f(\alpha,n) = \sum_{\ell=-L}^{R-1} \sum_{j=0}^{J(\ell)} b_{\ell j} \frac{\alpha^j}{n^{\ell}} + O(\alpha^{J(R)}n^{-R})$$
for all $R > -L$.

\begin{lemma}\label{lemma:central-binom-asy}
  For $n\in \frac{1}{2}\N$ and $|\alpha| \leq n^{2/3}$ such that $n - \alpha \in   \N$, we have
  \[ \binom{2n}{n-\alpha} \sim \frac{4^{n}}{\sqrt{n\pi}} \exp\left(-
    \frac{\alpha^{2}}{n}\right) \cdot S(\alpha, n)  \]
  with $S(\alpha, n) := \sum_{\ell, j\geq 0} c_{\ell j}
  \frac{\alpha^{2j}}{n^{\ell}}$ and
  \begin{align}
    c_{\ell j} & = [\alpha^{2j} n^{-\ell}]\bigg(\sum_{r\geq 0}
                 \frac{d_{r}}{(2n)^{r}}\bigg) \bigg(\sum_{r\geq
                 0} \frac{(-1)^{r} d_{r}}{(n+\alpha)^{r}}\bigg)
                 \bigg(\sum_{r\geq 0} \frac{ (-1)^{r}
                 d_{j}}{(n-\alpha)^{r}}\bigg)\label{eq:binom-coef}\\ 
               & \qquad \times \bigg(\sum_{r\geq 0} (-1)^{r}
                 \binom{-1/2}{r}
                 \frac{\alpha^{2r}}{n^{2r}}\bigg)\bigg(\sum_{r\geq 0}
                 \frac{1}{r!}
                 \frac{\alpha^{4r}}{n^{3r}} \bigg[ \sum_{t \geq 0}
                 \frac{-1}{(t + 2)(2t + 3)}
                 \frac{\alpha^{2t}}{n^{2t}} \bigg]^{r}\bigg),\notag
  \end{align}
  where the coefficients $d_{r}$ come from the higher-order Stirling
  approximation of the factorial,
  cf.~\eqref{eq:stirling}. Additionally, the estimate 
  \[ S(\alpha, n) = 1 + O\bigg(\frac{1 + |\alpha|}{n}\bigg)  \]
  holds for $|\alpha| \leq n^{2/3}$ and we know that $c_{00} = 1$
  as well as $c_{\ell j} = 0$ if $j > \frac{2}{3} \ell$.
  
  If $|\alpha| > n^{2/3}$, the term
  \[ \binom{2n}{n-\alpha} / 4^{n} = O(\exp(-n^{1/3})) \] 
  decays faster than any power of $n$.
\end{lemma}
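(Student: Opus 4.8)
The plan is to start from the exact identity $\binom{2n}{n-\alpha}=\frac{(2n)!}{(n-\alpha)!\,(n+\alpha)!}$, which is meaningful because the hypotheses force $2n$, $n-\alpha$ and $n+\alpha$ to be non-negative integers, and to insert the higher-order Stirling approximation $N!\sim\sqrt{2\pi N}\,(N/e)^{N}\sum_{r\ge0}d_{r}N^{-r}$ from~\eqref{eq:stirling} into each of the three factorials; this is legitimate since $|\alpha|\le n^{2/3}$ forces all three arguments to infinity. After the exponentials $e^{-2n}$ cancel, the result splits as a product of three pieces,
\[ \binom{2n}{n-\alpha}=\frac{1}{\sqrt{n\pi}}\Big(1-\frac{\alpha^{2}}{n^{2}}\Big)^{-1/2}\cdot\frac{(2n)^{2n}}{(n-\alpha)^{n-\alpha}(n+\alpha)^{n+\alpha}}\cdot\frac{\sum_{r\ge0}d_{r}(2n)^{-r}}{\big(\sum_{r\ge0}d_{r}(n-\alpha)^{-r}\big)\big(\sum_{r\ge0}d_{r}(n+\alpha)^{-r}\big)}, \]
namely an algebraic prefactor, the power term $P:=(2n)^{2n}\big/\big((n-\alpha)^{n-\alpha}(n+\alpha)^{n+\alpha}\big)$, and a ratio of Stirling correction series.

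For the power term I would pass to logarithms. Writing $\ln(n\pm\alpha)=\ln n+\ln(1\pm\alpha/n)$ and expanding $\ln(1+x)=\sum_{k\ge1}(-1)^{k+1}x^{k}/k$, the $2n\ln n$ terms and all odd-in-$\alpha$ contributions cancel, leaving
\[ \ln P=2n\ln 2-\frac{\alpha^{2}}{n}-\sum_{j\ge2}\frac{\alpha^{2j}}{j(2j-1)\,n^{2j-1}}. \]
Exponentiating produces $4^{n}\exp(-\alpha^{2}/n)$ times $\exp\!\big(-\sum_{j\ge2}\cdots\big)$, and expanding the latter via $e^{x}=\sum_{r}x^{r}/r!$ (after the substitution $j=t+2$ in the inner sum) yields exactly the last series factor in~\eqref{eq:binom-coef}. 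The remaining pieces are matched by recognising $(1-\alpha^{2}/n^{2})^{-1/2}=\sum_{r}(-1)^{r}\binom{-1/2}{r}\alpha^{2r}n^{-2r}$ as the penultimate factor, by keeping $\sum_{r}d_{r}(2n)^{-r}$ verbatim, and by the standard fact that the reciprocal of the Stirling series is obtained by sign alternation, $1\big/\sum_{r}d_{r}N^{-r}=\sum_{r}(-1)^{r}d_{r}N^{-r}$ (because the Stirling correction exponent is odd in $1/N$), applied with $N=n\pm\alpha$. Thus $S(\alpha,n)$ is the product of these five series; expanding $(n\pm\alpha)^{-r}=n^{-r}\sum_{s}\binom{-r}{s}(\pm\alpha/n)^{s}$ turns it into a formal double series in $\alpha$ and $1/n$, and $c_{\ell j}=[\alpha^{2j}n^{-\ell}]$ of the product. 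That $c_{00}=1$ is immediate since each of the five factors has constant term $1$. For the support statement $c_{\ell j}=0$ whenever $j>\tfrac23\ell$ I would assign to a monomial $\alpha^{a}n^{-\ell}$ the integer weight $w:=4\ell-3a$ and check that each factor contributes only monomials with $w\ge0$: a term of a Stirling factor has $a=s$, $\ell=r+s$, so $w=4r+s\ge0$; the square-root factor has $a=\ell$, so $w=\ell\ge0$; and the $r$-th term of the outer exponential contributes $a=4r+2\rho$, $\ell=3r+2\rho$ for some integer $\rho\ge0$, so $w=2\rho\ge0$. Since $w$ is additive under multiplication, every monomial of $S(\alpha,n)$ has $w\ge0$, and for $\alpha^{2j}n^{-\ell}$ this reads $4\ell-6j\ge0$, i.e.\ $j\le\tfrac23\ell$. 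The coarse estimate $S(\alpha,n)=1+O((1+|\alpha|)/n)$ then drops out of the truncation at $R=2$: only $c_{10}$ survives at order $n^{-1}$, and the remainder is $O(\alpha^{2}n^{-2})=O(|\alpha|/n)$ because $|\alpha|\le n$.

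Turning these formal identities into the error terms required by the paper's asymptotic-expansion convention is the part that needs genuine care rather than cleverness. Each of the five series has to be truncated at a finite order, and one must verify that the tails are $O(\alpha^{J(R)}n^{-R})$ uniformly for $|\alpha|\le n^{2/3}$; the enabling inequality is $\alpha^{2}/n^{2}\le n^{-2/3}$ throughout this range, so that for instance $\sum_{j>J}\alpha^{2j}/(j(2j-1)n^{2j-1})\le n\sum_{j>J}n^{-2j/3}=O(n^{-R})$ once $J$ is large enough, with analogous geometric-tail bounds for the Stirling series and the outer exponential. One then multiplies the finitely many truncated factors and re-expands, the weight count ensuring that only finitely many $c_{\ell j}$ with $\ell\le R-1$ are nonzero, so the expansion closes with the stated remainder. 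I expect this uniform bookkeeping — tracking precisely how the remainder depends on $\alpha$ — to be the main obstacle; the algebraic identification of the five factors is routine by comparison.

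Finally, for $|\alpha|>n^{2/3}$ I would invoke unimodality of $k\mapsto\binom{2n}{k}$: let $\alpha_{1}$ be the largest value with $n-\alpha_{1}\in\N$ and $\alpha_{1}\le n^{2/3}$ (so $\alpha_{1}\in(n^{2/3}-1,n^{2/3}]$ for large $n$). Then $\binom{2n}{n-\alpha}=\binom{2n}{n-|\alpha|}\le\binom{2n}{n-\alpha_{1}}$, and since $\alpha_{1}$ lies in the range already handled, the first part gives $\binom{2n}{n-\alpha_{1}}/4^{n}\sim\frac{1}{\sqrt{n\pi}}\exp(-\alpha_{1}^{2}/n)\,S(\alpha_{1},n)$ with $S(\alpha_{1},n)=1+O(n^{-1/3})=O(1)$ and $\exp(-\alpha_{1}^{2}/n)\le\exp(-(n^{2/3}-1)^{2}/n)\le e^{2}\exp(-n^{1/3})$. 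Hence $\binom{2n}{n-\alpha}/4^{n}=O(\exp(-n^{1/3}))$, which is the assertion. (When $2n$ is even one may alternatively bound $\binom{2n}{n-\alpha}\big/\binom{2n}{n}=\prod_{i=1}^{|\alpha|}\frac{n-i+1}{n+i}\le\exp\!\big(-|\alpha|^{2}/(n+|\alpha|)\big)\le e\exp(-n^{1/3})$ directly, using $\sum_{i=1}^{m}(2i-1)=m^{2}$ and the monotonicity of $t\mapsto t^{2}/(n+t)$.)
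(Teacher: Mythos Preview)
Your proof is correct and follows essentially the same route as the paper's: Stirling applied to each of the three factorials, the same logarithmic expansion of the power term $P$, the binomial expansion of $(1-\alpha^2/n^2)^{-1/2}$, and monotonicity of the binomial coefficients for the tail bound $|\alpha|>n^{2/3}$. Your additive weight argument $w=4\ell-3a$ is a cleaner packaging of the paper's more informal observation that the worst $\alpha$-to-$n^{-1}$ ratio comes from the $\alpha^{4r}/n^{3r}$ terms, and you give a bit more detail on the tail estimate, but these are refinements of the same argument rather than a different approach.
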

\begin{proof}
  We begin by recalling the higher-order Stirling approximation (cf.\
  \cite[p.~760]{Flajolet-Sedgewick:ta:analy}) 
  \begin{equation}\label{eq:stirling}
    n! \sim \sqrt{2\pi n} \left(\frac{n}{e}\right)^{n} \bigg(\sum_{j\geq
        0} \frac{d_{j}}{n^{j}}\bigg).
  \end{equation}
  An explicit representation of the coefficients $d_{j}$ can be
  found in \cite{Nemes:2010:coef-factorial}. From the logarithmic
  representation of the factorial (see
  \cite[p.~766]{Flajolet-Sedgewick:ta:analy}), the expansion
  \begin{equation}\label{eq:stirling-reciprocal}
    \frac{1}{n!} \sim \frac{1}{\sqrt{2\pi n}} \Big(\frac{e}{n}\Big)^{n}
    \bigg(\sum_{j\geq 0} \frac{(-1)^{j} d_{j}}{n^{j}}\bigg)
  \end{equation}
  for the reciprocal factorial follows.

  Let us assume $|\alpha| \leq n^{2/3}$. Then, by applying
  \eqref{eq:stirling} and \eqref{eq:stirling-reciprocal} to the
  shifted central binomial coefficient, we obtain
  \begin{align*}
    \binom{2n}{n-\alpha} 
    & = \frac{(2n)!}{(n-\alpha)!\, (n+\alpha)!} \\
    & = \frac{1}{\sqrt{n \pi}} \bigg(1 -
      \frac{\alpha^{2}}{n^{2}}\bigg)^{-1/2}
      \frac{(2n)^{2n}}{(n+\alpha)^{n+\alpha} (n-\alpha)^{n-\alpha}}\\
    & \qquad\qquad \times \bigg(\sum_{r\geq 0}
      \frac{d_{r}}{(2n)^{r}}\bigg) \bigg(\sum_{r\geq 0}
      \frac{(-1)^{r} d_{r}}{(n+\alpha)^{r}}\bigg) \bigg(\sum_{r\geq
      0} \frac{(-1)^{r} d_{r}}{(n-\alpha)^{r}}\bigg).
  \end{align*}
  The factor $\big(1 - \frac{\alpha^{2}}{n^{2}}\big)^{-1/2}$ can be
  expanded as a binomial series, resulting in
  \[ \Big(1 - \frac{\alpha^{2}}{n^{2}}\Big)^{-1/2} = \sum_{r\geq 0}
  (-1)^{r} \binom{-1/2}{r} \frac{\alpha^{2r}}{n^{2r}}.  \]
  
  The remaining factor is handled by means of the identity $n^{n} =
  \exp(n\log n)$, which leads to
  \begin{align*}
    \frac{(2n)^{2n}}{(n+\alpha)^{n+\alpha} (n-\alpha)^{n-\alpha}} 
    & = \exp(2n \log(2n) -(n+\alpha)\log(n+\alpha) - (n-\alpha)\log(n-\alpha))\\ 
    & = \exp(2n\log2 + 2n\log n - (n+\alpha)(\log n + \log(1 + \alpha/n)) \\
    & \qquad\qquad\qquad\qquad {} - (n-\alpha)(\log n + \log(1 - \alpha/n))) \\
    & = 4^{n}  \exp(\alpha\log(1 - \alpha/n) - \alpha\log(1 + \alpha/n) \\
    & \qquad\qquad\qquad\qquad {} - n\log(1 - \alpha/n) - n\log(1 +
      \alpha/n)).
  \end{align*}
By expanding the logarithm into a power series, we
  can simplify this expression to
  \begin{align*}
    \frac{(2n)^{2n}}{(n+\alpha)^{n+\alpha} (n-\alpha)^{n-\alpha}} 
    & = 4^{n} \exp\bigg(2 \bigg[- \sum_{t\geq 0}
      \frac{1}{2t+1} \frac{\alpha^{2t+2}}{n^{2t+1}} + \sum_{t\geq 0}
      \frac{1}{2t+2} \frac{\alpha^{2t+2}}{n^{2t+1}}\bigg]\bigg) \\
    & = 4^{n}  \exp\Big(-\frac{\alpha^{2}}{n}\Big) 
      \exp\bigg(- \frac{\alpha^{4}}{n^{3}} \sum_{t\geq 0}
      \frac{1}{(t+2)(2t+3)} \frac{\alpha^{2t}}{n^{2t}} \bigg) \\
    & = 4^{n} \exp\Big(-\frac{\alpha^{2}}{n}\Big) 
      \bigg(\sum_{r\geq 0} \frac{1}{r!} \frac{\alpha^{4r}}{n^{3r}}
      \bigg[\sum_{t\geq 0} \frac{-1}{(t+2)(2t+3)}\frac{\alpha^{2t}}{n^{2t}}\bigg]^{r}\bigg).
  \end{align*}
  We also use
  \[ \frac{1}{n\pm\alpha} = \frac{1}{n} \frac{1}{1 \pm
    \frac{\alpha}{n}} = \frac{1}{n} \sum_{r\geq 0} \left(\mp
    \frac{\alpha}{n}\right)^{r}.  \]
  By the symmetry of the binomial coefficient, the resulting
  asymptotic expansion has to be symmetric in $\alpha$.
  Assembling all these expansions yields the asymptotic formula
  \[ \binom{2n}{n-\alpha} \sim \frac{4^{n}}{\sqrt{n\pi}}
  \exp\Big(-\frac{\alpha^{2}}{n}\Big) \cdot S(\alpha, n),  \]
  where $S(\alpha, n)$ is defined as in the statement of the lemma. 

  Note that $d_{0} = 1$, and thus the first summand of the
  series in~\eqref{eq:binom-coef} is $1$---which gives $c_{00} =
  1$. Summands where the exponent of $\alpha$ exceeds the exponent of $1/n$ only occur in
  the last series, with the maximal difference being induced by
  $\alpha^{4r}/n^{3r}$. Thus, if $j > \frac{2}{3} \ell$, we have
  $c_{\ell j} = 0$. Together with $|\alpha| \leq n^{2/3}$, this
  implies the estimate for $S(\alpha, n)$. 

  For $|\alpha| > n^{2/3}$, we can use the monotonicity of the
  binomial coefficient to obtain
  \[ \binom{2n}{n-\alpha} \leq \binom{2n}{n - \lceil n^{2/3} \rceil},   \]
  for which the exponential factor ensures fast decay,
  \[ \exp\Big(-\frac{\lceil n^{2/3}\rceil^{2}}{n}\Big) =
  O\left(\exp\left(-n^{1/3}\right)\right),  \]
  and as everything else is of polynomial growth, the statement of the
  lemma follows.
\end{proof}

Now that we have an asymptotic expansion for the shifted central
binomial coefficient, let us look at our explicit formula in~\eqref{eq:prob-expl-T} again: we have
\begin{equation*}
    p_{2m-1}^{(h)} = \frac{4}{4^{m}} \sum_{k\geq 0} (-1)^{k}
    \frac{\tau_{h,k}}{m} \binom{2m}{m-\tau_{h,k}} \cdot \llbracket h+1
    \equiv 2m \bmod 2 \rrbracket,
\end{equation*}
where $\tau_{h,k} = (h+1)(2k+1)/2$. Therefore, the total probability for a random walk of length $2m-1$ on $\N_0$ to be admissible is given by
\begin{equation*}
p_{2m-1} = \sum_{h \geq 0}  p_{2m-1}^{(h)} = \frac{4}{4^{m}} \sum_{\substack{h,k\geq 0 \\ h+1\equiv
    2m\bmod 2}} (-1)^{k} \frac{\tau_{h,k}}{m} \binom{2m}{m-\tau_{h,k}}.
\end{equation*}
The terms where $\tau_{h,k} > m^{2/3}$ can be neglected in view of the last statement in Lemma~\ref{lemma:central-binom-asy}, as their total contribution decays faster than any power of $m$: note that there are only $O(m^2)$ such terms (trivially, $h$, $k \leq m$), each of which contributes $O(m \exp(-m^{1/3}))$ to the sum. For all other values of $h$ and $k$, we can replace the binomial coefficient by its asymptotic expansion. This gives us, for any $L > 0$,
\begin{align*}
  p_{2m-1} &= \frac{4}{\sqrt{m\pi}}
  \hspace{-1em}\sum_{\substack{h,k\geq 0, \tau_{h,k} \leq m^{2/3} \\ h+1\equiv
    2m\bmod 2}} \hspace{-1em} (-1)^{k}
  \frac{\tau_{h,k}}{m} \exp\Big(-\frac{\tau_{h,k}^{2}}{m}\Big) \sum_{\ell = 0}^{L-1} \sum_{j \geq 0}
  c_{\ell j}  \frac{\tau_{h,k}^{2j}}{m^{\ell}} \\
&\quad + O \bigg( \frac{1}{\sqrt{m}} \sum_{\substack{h,k\geq 0, \tau_{h,k} \leq m^{2/3} \\ h+1\equiv
    2m\bmod 2}} \frac{\tau_{h,k}^{2J(L)+1}}{m^{L+1}}\bigg),
\end{align*}
where $J(L) \leq \frac23L$ since $c_{\ell j} = 0$ for $j > \frac23\ell$. Since the sum clearly contains $O(m^{4/3})$ terms, the error is at most $O(m^{-1/2+4/3+2/3(2J(L)+1)-(L+1)}) = O(m^{1/2-L/9})$. The exponent can be made arbitrarily small by choosing $L$ accordingly. Finally, if we extend the sum to the full range (all integers $h$, $k \geq 0$ such that $h+1 \equiv 2m \bmod 2$) again, we only get another error term of order $O(\exp(-m^{1/3}))$, which can be neglected. In summary, we have
\begin{equation}\label{eq:prob-T-asy}
  p_{2m-1} \sim \frac{4}{\sqrt{m\pi}}
  \hspace{-1em}\sum_{\substack{h,k\geq 0 \\ h+1\equiv
    2m\bmod 2}} \hspace{-1em} (-1)^{k}
  \frac{\tau_{h,k}}{m} \exp\Big(-\frac{\tau_{h,k}^{2}}{m}\Big)
   \sum_{\ell, j\geq 0} c_{\ell j}
  \frac{\tau_{h,k}^{2j}}{m^{\ell}}.
\end{equation}
This sum can be analyzed with the help of the Mellin transform and the
converse mapping theorem (cf.\ \cite{Flajolet-Gourdon-Dumas:1995:mellin}).
In order to follow this approach, we will investigate those
terms in~\eqref{eq:prob-T-asy} whose growth is not obvious more precisely. That
is, we will focus on the contribution of terms of the form
\[ \sum_{\substack{h,k\geq 0 \\ h+1\equiv 2m\bmod 2}} \hspace{-1em}(-1)^{k}
\tau_{h,k}^{2j+1} \exp\Big(-\frac{\tau_{h,k}^{2}}{m}\Big). \]

We are also interested in the expected height and the corresponding
variance and higher moments of admissible random walks. Asymptotic expansions for these
can be obtained by analyzing moments of the random variable $H_{n}$
with $\Prob(H_{n} = h) := \frac{p_{n}^{(h)}}{p_{n}}$, as stated in the
introduction. For the sake of convenience, let us consider the $r$-th
shifted moment $\E(H_{2m-1} + 1)^{r}$. We know
\[ \E(H_{2m-1} + 1)^{r} = \sum_{h\geq 0} (h+1)^{r} \Prob(H_{2m-1} =
h) = \frac{\sum_{h\geq 0} (h+1)^{r} p_{2m-1}^{(h)}}{p_{2m-1}}.  \]
The asymptotic behavior of the denominator is related to the behavior
of the sum from above---and fortunately, the behavior of the numerator
is related to the behavior of the very similar sum
\[ \sum_{\substack{h,k\geq 0 \\ h+1\equiv 2m\bmod 2}} (-1)^{k}
\tau_{h,k}^{2j+1} (h+1)^{r} \exp\Big(-\frac{\tau_{h,k}^{2}}{m}\Big). \]
The following lemma analyzes sums of this structure asymptotically.

\begin{lemma}\label{lemma:summands-T}
  Let $j$, $r\in \N_{0}$. Then we have
  \begin{multline}\label{eq:asymptotic-summands-T}
    \sum_{\substack{h,k\geq 0 \\ h+1\equiv 2m\bmod 2}} (-1)^{k}
    \tau_{h,k}^{2j+1} (h+1)^{r}
    \exp\Big(-\frac{\tau_{h,k}^{2}}{m}\Big) \\ = 2^{r-1}
    \Gamma\Big(j+1+\frac{r}{2}\Big) \beta(r+1) m^{j+1+r/2} + O(m^{-K})
  \end{multline}
  for any fixed $K > 0$, where $\beta(\,\cdot\,)$ denotes the
  Dirichlet beta function.
\end{lemma}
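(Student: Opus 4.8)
The plan is to recognize the left-hand side as (essentially) a harmonic sum and to apply the Mellin transform together with the converse mapping theorem. First I would substitute $\tau_{h,k} = (h+1)(2k+1)/2$ and introduce the index $d := h+1$, which ranges over all positive integers of the same parity as $2m$ (i.e., either all odd or all even $d$, depending on whether $h+1 \equiv 2m \bmod 2$; note that $2m \bmod 2$ is $0$ if $m$ is an integer and $1$ if $m$ is a half-integer, so in fact $d$ runs over \emph{all} positive integers in one parity class). Writing $\tau_{h,k}^2/m = \big(d(2k+1)/2\big)^2/m$, the sum becomes
\[ \Big(\tfrac12\Big)^{2j+1} \sum_{d} \sum_{k\geq 0} (-1)^{k} d^{2j+1+r}(2k+1)^{2j+1} \exp\Big(-\frac{d^2(2k+1)^2}{4m}\Big), \]
with $d$ restricted to one parity class. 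This is a harmonic sum $\sum_{\lambda} a_\lambda g(\mu_\lambda x)$ with $x = 1/(4m)$, amplitudes carrying the factors $d^{2j+1+r}(-1)^k(2k+1)^{2j+1}$, frequencies $\mu_\lambda = d^2(2k+1)^2$, and base function $g(t) = e^{-t}$.

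Next I would compute the Mellin transform. We have $\int_0^\infty e^{-t} t^{s-1}\,dt = \Gamma(s)$, and the Dirichlet series of the amplitudes factors as a product: the $k$-sum gives $\sum_{k\geq 0} (-1)^k (2k+1)^{2j+1-2s} = \beta(2s-2j-1)$ (up to the sign convention in the definition of the Dirichlet beta function), and the $d$-sum over one parity class gives either $\sum_{d \text{ odd}} d^{2j+1+r-2s}$ or $\sum_{d \text{ even}} d^{2j+1+r-2s}$; in the first case this is $(1-2^{2s-2j-1-r})\zeta(2s-2j-1-r)$, in the second $2^{2j+1+r-2s}\zeta(2s-2j-1-r)$. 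So the Mellin transform of the whole sum (in the variable $x=1/(4m)$, after pulling out the constant $(1/2)^{2j+1}$ and rescaling) is, up to elementary factors,
\[ 4^{-s}\,\Gamma(s)\,\beta(2s-2j-1)\,\zeta(2s-2j-1-r) \times (\text{parity factor}), \]
valid in a right half-plane. The converse mapping theorem then says that the asymptotic expansion of the original sum as $m\to\infty$ (equivalently $x\to 0^+$) is obtained by summing residues of $x^{-s}$ times this transform over the poles to the left. The leading contribution comes from the pole of $\zeta(2s-2j-1-r)$ at $2s-2j-1-r = 1$, i.e., $s = j+1+r/2$; the residue of $\zeta$ there is $1/2$, which combined with $x^{-s} = (4m)^{s}$, the $4^{-s}$, the constant $(1/2)^{2j+1}$, and the value $\Gamma(j+1+r/2)\,\beta(r+1)$ and the parity factor should reproduce exactly $2^{r-1}\Gamma(j+1+r/2)\beta(r+1)\,m^{j+1+r/2}$ after the bookkeeping. (One should double-check here that the parity factors — $(1-2^{-r})$ or $2^{-r}$ and the various powers of $2$ — collapse correctly; this is the kind of routine but error-prone arithmetic the claim is asserting.)

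The remaining issue is to show there are \emph{no} further contributions down to order $O(m^{-K})$ for every $K$; that is, all other poles contribute terms that either vanish or are absorbed. The key point is that $\Gamma(s)$ has poles at $s = 0, -1, -2, \ldots$, but at those points the Dirichlet beta function is evaluated at negative odd integers $2s-2j-1 = -1, -3, -5, \ldots$, where $\beta$ vanishes (its trivial zeros), so those residues are zero; similarly the $\zeta$ factor has no poles other than the one already used, and $\beta$ is entire, so after that single pole the transform is analytic in the whole remaining half-plane. Hence pushing the contour arbitrarily far to the left yields only the one explicit term plus an error $O(m^{-K})$ for arbitrarily large $K$, provided one verifies the standard growth estimates on $\Gamma$, $\zeta$, $\beta$ along vertical lines needed to bound the shifted integral (polynomial growth of $\zeta$ and $\beta$ in vertical strips against the exponential decay of $\Gamma$). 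The main obstacle is this last analytic step — justifying the contour shift uniformly, i.e., controlling the integrand on the moved contour and checking that no pole is overlooked (in particular being careful about a possible double pole or an extra pole of $\Gamma$ coinciding with a zero of $\beta$) — together with the clean verification that the constant in front of $m^{j+1+r/2}$ is exactly $2^{r-1}\Gamma(j+1+r/2)\beta(r+1)$ with all parity-dependent powers of $2$ accounted for.
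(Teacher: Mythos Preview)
Your approach is correct and essentially identical to the paper's: both recognize the sum as a harmonic sum, compute its Mellin transform as a product of $\Gamma$, $\beta$, and a parity-dependent $\zeta$-factor, then apply the converse mapping theorem, noting that the only pole comes from $\zeta$ (with residue independent of the parity class) while the poles of $\Gamma$ are killed by the trivial zeros of $\beta$. The only cosmetic difference is that the paper substitutes $m=x^{-2}$ with base function $e^{-x^2}$ (Mellin transform $\tfrac12\Gamma(s/2)$), whereas you substitute $x=1/(4m)$ with base function $e^{-t}$; this amounts to the change $s\leftrightarrow s/2$ and makes your stray $4^{-s}$ unnecessary once the bookkeeping is done carefully (and note your odd-$d$ factor should read $(1-2^{-(2s-2j-1-r)})$, not $(1-2^{2s-2j-1-r})$).
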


\begin{remark}
The Dirichlet beta function is also often called Catalan beta function, and it is defined by
$$\beta(s) = \sum_{k=0}^{\infty} \frac{(-1)^k}{(2k+1)^s}.$$
It can be expressed in terms of the Hurwitz zeta function as $\beta(s) = 4^{-s}(\zeta(s, 1/4) - \zeta(s, 3/4))$. Amongst many other interesting properties, it satisfies the zeta-like functional equation (cf.\ \cite[3:5:2]{Oldham-Spanier:1987:atlas-funct})
$$\beta(1-s) = (\pi/2)^{-s} \sin(\pi s/2) \Gamma(s) \beta(s),$$
which also implies that $\beta(s)$ has zeros at all negative odd integers.
\end{remark}

\begin{proof}[Proof of Lemma~\ref{lemma:summands-T}]
If we substitute $m = x^{-2}$, the left-hand side of \eqref{eq:asymptotic-summands-T} becomes
  \[ f(x) := \hspace{-1em}\sum_{\substack{h,k\geq 0 \\ h+1\equiv 2m\bmod 2}} \hspace{-1em} (-1)^{k}
  \tau_{h,k}^{2j+1} (h+1)^{r} \exp(-\tau_{h,k}^{2} x^{2}) .  \]
This is a typical example of a harmonic sum, cf.\ \cite[\S 3]{Flajolet-Gourdon-Dumas:1995:mellin}, and the Mellin transform can be applied to obtain its asymptotic behaviour. First of all, it is well-known that the Mellin transform of a harmonic sum of the form $f(x) = \sum_{k \geq 1} a_k g(b_k x)$ can be factored as $\sum_{k \geq 1} a_k b_k^{-s} g^*(s)$ \cite[Lemma
  2]{Flajolet-Gourdon-Dumas:1995:mellin}, provided that the half-plane of absolute convergence of the Dirichlet series $\Lambda(s) = \sum_{k \geq 1} a_k b_k^{-s}$ has non-empty intersection with the fundamental strip of the Mellin transform $g^*$ of the base function $g$. In this particular case, the Dirichlet series is
  \[ \Lambda(s) := \hspace{-1em} \sum_{\substack{h,k\geq 0 \\ h+1\equiv 2m\bmod 2}}
  \hspace{-1em} (-1)^{k} \tau_{h,k}^{2j+1-s} (h+1)^{r}, \] 
  and the base function is $g(x) = \exp(-x^{2})$, with Mellin transform $g^{*}(s) = \frac{1}{2} \Gamma\big(\frac{s}{2}\big)$ and
fundamental strip $\langle 0, \infty\rangle$.


Now we simplify the Dirichlet series.
For $s\in \C$ with $\Re(s) >  2j + 2 +r$, the sum
  \[ \Lambda(s) = 2^{s - (2j+1)} \hspace{-1em}\sum_{\substack{h,k\geq 0 \\ h+1\equiv
    2m\bmod 2}} \hspace{-1em} (-1)^{k} (h+1)^{2j+1+r-s}
  (2k+1)^{2j+1-s} \]
  converges absolutely because it is dominated by the
  zeta function. In view of the definition of the $\beta$ function, this simplifies to
  \begin{align*}
    \Lambda(s) 
               & = 2^{s-(2j+1)} \beta(s-(2j+1)) \kappa_{2m}(s - (2j+1+r)),
  \end{align*}
  where $\kappa_{2m}(s)$ depends on the parity of $2m$. We find
  \[ \kappa_{2m}(s) = \hspace{-1em} \sum_{\substack{h\geq 0 \\ h+1\equiv 2m\bmod 2}}  \hspace{-1em} (h+1)^{-s}
 = \begin{cases} 2^{-s} \zeta(s) & \text{ for } m\in\N, \\ (1-2^{-s})
   \zeta(s) & \text{ for } m\not\in\N. 
  \end{cases}  \]
Thus, the Mellin transform of $f$ is
  \[ f^{*}(s) = \Lambda(s) g^{*}(s) = \frac{1}{2}
  \Gamma\Big(\frac{s}{2}\Big) 2^{s - (2j+1)} \beta(s- (2j+1))
  \kappa_{2m}(s - (2j+1+r)).  \]
  By the converse mapping theorem (see
  \cite[Theorem~4]{Flajolet-Gourdon-Dumas:1995:mellin}), the
  asymptotic growth of $f(x)$ for $x\to 0$ can be found by considering the 
analytic continuation of $f^{*}(s)$ further to the left of the complex   plane and investigating its poles. The
  theorem may be applied because $\Lambda(s)$ has polynomial growth
  and $\Gamma(s/2)$ decays exponentially along vertical lines of the
  complex plane.

  We find that $f^{*}(s)$ has a simple pole at $s = 2j+2+r$, which
  comes from the zeta function in the definition of $\kappa_{2m}$. There
  are no other poles: $\beta$ is an entire function, and the
  poles of $\Gamma$ cancel against the zeros of $\beta$ (at all odd negative integers, see the earlier remark).

  The asymptotic contribution from the pole of $f^{*}$ is
  \begin{align*} 
    \Res(f^{*}, s=2j+2+r)\cdot x^{-(2j+2+r)}
    &= \frac{1}{2} \Gamma\Big(j+1 + \frac{r}{2}\Big) 2^{r+1}
       \beta(r+1)  \frac{1}{2} x^{-(2j+2+r)} \\
    & = 2^{r-1}\Gamma\Big(j+1+\frac{r}{2}\Big) \beta(r+1) m^{j+1+r/2},
  \end{align*}
  which does not depend on the parity of $2m$, as the respective
  residue of $\kappa_{2m}$ is $\frac{1}{2}$ in either case. Finally, the
  $O$-term in \eqref{eq:asymptotic-summands-T} comes from the fact
  that $f^{*}$ may be continued analytically arbitrarily far to the
  left in the complex plane without encountering any
  additional poles. 
\end{proof}
\begin{remark}
  In Lemma~\ref{lemma:summands-T}, particular values of the Dirichlet
  beta function are required. To compute the asymptotic expansions for
  the first moments, we need $\beta(1) = \pi/4$, $\beta(2) = G \approx
  0.91597$, as well as $\beta(3) = \pi^{3}/32$, where $G$ is the
  Catalan constant. These values are taken from
  \cite[Table~3.7.1]{Oldham-Spanier:1987:atlas-funct}.  
\end{remark}

At this point, all that remains to obtain asymptotic expansions is to
multiply the contributions resulting from Lemma~\ref{lemma:summands-T}
with the correct coefficients and contributions
from~\eqref{eq:prob-T-asy}. 

\begin{theorem}[Asymptotic analysis of admissible random walks on $\N_{0}$]\label{thm:asy-NN}
  The probability that a random walk on $\N_{0}$ is admissible can be
  expressed asymptotically as
  \begin{equation}\label{eq:asy-p-prob}
    p_{n} = \sqrt{\frac{\pi}{2n}} -
  \frac{5\sqrt{2\pi}}{24\sqrt{n^{3}}} +
  \frac{127\sqrt{2\pi}}{960\sqrt{n^{5}}} - \frac{1571
    \sqrt{2\pi}}{16128\sqrt{n^{7}}} -
  \frac{1896913\sqrt{2\pi}}{184320\sqrt{n^{9}}} +
  O\Big(\frac{1}{\sqrt{n^{11}}}\Big),
  \end{equation}
  where $\sqrt{\pi/2} \approx 1.25331$. The expected height of
  admissible random walks is given by 
  \begin{equation}\label{eq:asy-p-exp}
    \E H_{n} = 2G \sqrt{\frac{2n}{\pi}} - 1 + \frac{5\sqrt{2}
    G}{6\sqrt{\pi n}} - \frac{131 \sqrt{2} G}{720 \sqrt{\pi n^{3}}} +
  \frac{1129\sqrt{2} G}{12096 \sqrt{\pi n^{5}}} +
  O\Big(\frac{1}{\sqrt{n^{7}}}\Big),
  \end{equation}
  where $2G\sqrt{2/\pi} \approx 1.46167$, and the variance of $H_{n}$
  can be expressed as 
  \begin{equation}\label{eq:asy-p-var}
    \V H_{n} = \frac{\pi^{3} - 32G^{2}}{4\pi} n + \frac{\pi^{3} -
    40G^{2}}{6\pi} - \frac{\pi^{3} - 12G^{2}}{180 \pi n} + \frac{11
    \pi^{3} - 265 G^{2}}{1890 \pi n^{2}} +
  O\Big(\frac{1}{n^{3}}\Big),
  \end{equation}
  where $(\pi^{3} - 32G^{2})/(4\pi) \approx 0.33092$. Generally, the $r$-th moment is asymptotically given by
\begin{equation}
\E H_n^r \sim \frac{2^{r/2+2}}{\pi} \Gamma \Big( \frac{r}{2} + 1 \Big) \beta(r+1) n^{r/2}.
\end{equation}
Moreover, if $\eta = h/\sqrt{n}$ satisfies $3/\sqrt{\log n} < \eta < \sqrt{\log n}/2$ and $h \equiv n \bmod 2$, we have the local limit theorem
\begin{align}\label{eq:theta1_p}
\Prob(H_n = h) = \frac{p_n^{(h)}}{p_n} \sim \frac{2\phi(\eta)}{\sqrt{n}} &= \frac{8\eta}{\pi \sqrt{n}} \sum_{k \geq 0} (-1)^k (2k+1) \exp \Big( -\frac{(2k+1)^2\eta^2}{2} \Big) \\\label{eq:theta2_p}
&= \frac{2\sqrt{2\pi}}{\eta^2\sqrt{n}} \sum_{k \geq 0} (-1)^k (2k+1) \exp \Big( -\frac{\pi^2(2k+1)^2}{8\eta^2} \Big).
\end{align}
\end{theorem}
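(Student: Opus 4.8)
The plan is to assemble the theorem from the two lemmas already established. For the first formula \eqref{eq:asy-p-prob}: starting from the asymptotic identity \eqref{eq:prob-T-asy}, I would multiply out the factor $\frac{4}{\sqrt{m\pi}}\cdot\frac{\tau_{h,k}}{m}\cdot\sum_{\ell,j}c_{\ell j}\tau_{h,k}^{2j}m^{-\ell}$ and interchange the (finite-by-truncation) sums over $\ell,j$ with the sum over $h,k$. Each resulting inner sum is exactly of the form treated in Lemma~\ref{lemma:summands-T} with $r=0$, so it equals $2^{-1}\Gamma(j+1)\beta(1)m^{j+1}+O(m^{-K})$. Collecting these, $p_{2m-1}$ becomes a power series in $m^{-1}$ (half-integer $m$), and substituting $m=(n+1)/2$ and re-expanding in powers of $n^{-1}$ yields \eqref{eq:asy-p-prob}. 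Since $\beta(1)=\pi/4$, the leading term is $\frac{4}{\sqrt{m\pi}}\cdot\frac{1}{m}\cdot\frac12\Gamma(1)\cdot\frac\pi4\cdot m = \sqrt{\pi/m}/1 \cdot(\cdot)$, which matches $\sqrt{\pi/(2n)}$ after the substitution; the deeper coefficients just require knowing the $c_{\ell j}$, i.e.\ the higher-order Stirling coefficients $d_r$, and are a finite (if tedious) computation.

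For the moments: by definition $\E(H_{2m-1}+1)^r = \big(\sum_{h}(h+1)^r p_{2m-1}^{(h)}\big)/p_{2m-1}$. The numerator, upon inserting \eqref{eq:prob-expl-T} and the binomial expansion from Lemma~\ref{lemma:central-binom-asy}, is a sum of terms $\sum_{h,k}(-1)^k\tau_{h,k}^{2j+1}(h+1)^r\exp(-\tau_{h,k}^2/m)$ weighted by $c_{\ell j}/m^{\ell}$ and the prefactor $\frac{4}{\sqrt{m\pi}m}$; Lemma~\ref{lemma:summands-T} evaluates each as $2^{r-1}\Gamma(j+1+r/2)\beta(r+1)m^{j+1+r/2}+O(m^{-K})$. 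Dividing the leading behaviour of the numerator, $\sim \frac{4}{\sqrt{m\pi}m}\cdot 2^{r-1}\Gamma(1+r/2)\beta(r+1)m^{1+r/2}$, by the leading behaviour $p_{2m-1}\sim\sqrt{\pi/m}$ of the denominator gives $\E(H_{2m-1}+1)^r \sim \frac{2^{r/2+2}}{\pi}\Gamma(r/2+1)\beta(r+1)(2m)^{r/2}$; since $H_n$ grows like $\sqrt n$, $\E(H_n+1)^r$ and $\E H_n^r$ have the same leading term, which after $2m\approx n$ is the stated formula. The expected value \eqref{eq:asy-p-exp} and variance \eqref{eq:asy-p-var} then follow from taking $r=1,2$, carrying enough correction terms in both numerator and denominator, performing the division as a power series in $m^{-1}$, using $\V H_n=\E H_n^2-(\E H_n)^2$, and finally re-expanding in $n^{-1}$ after $m=(n+1)/2$; the constant $-1$ in $\E H_n$ comes from the shift $\E H_n = \E(H_n+1)-1$.

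For the local limit theorem \eqref{eq:theta1_p}: here I would not sum over $h$. Fix $h\equiv n\bmod 2$ with $h=\eta\sqrt n$ in the stated window. In \eqref{eq:prob-expl-T} the relevant $k$ are those with $\tau_{h,k}\le m^{2/3}$, i.e.\ $k=O(\sqrt{\log n})$; for those, replace $\binom{2m}{m-\tau_{h,k}}$ by $\frac{4^m}{\sqrt{m\pi}}\exp(-\tau_{h,k}^2/m)(1+O((1+\tau_{h,k})/m))$ from Lemma~\ref{lemma:central-binom-asy}, giving $p_{2m-1}^{(h)}\sim\frac{4}{\sqrt{m\pi}}\sum_{k\ge0}(-1)^k\frac{\tau_{h,k}}{m}\exp(-\tau_{h,k}^2/m)$. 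With $\tau_{h,k}=(h+1)(2k+1)/2$ and $m\approx n/2$, this is $\frac{8(h+1)}{\pi n}\cdot\frac{\sqrt\pi}{\sqrt n}\cdot\frac{\sqrt n}{\sqrt\pi}\sum_k(-1)^k(2k+1)\exp(-(2k+1)^2(h+1)^2/(2n))$ — i.e.\ $\sim\frac{8\eta}{\pi\sqrt n}\sum_k(-1)^k(2k+1)e^{-(2k+1)^2\eta^2/2}$ after absorbing $(h+1)/\sqrt n\to\eta$ — which is $\frac{2\phi(\eta)}{\sqrt n}$ with $\phi$ the theta-type density in the statement. Dividing by $p_n\sim\sqrt{\pi/(2n)}=\sqrt{2\pi}/(2\sqrt n)\cdot\sqrt 2$… more carefully $p_n\sim\sqrt{\pi/(2n)}$, and $\Prob(H_n=h)=p_n^{(h)}/p_n\sim\frac{2\phi(\eta)}{\sqrt n}$. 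The second representation \eqref{eq:theta2_p} is the classical Jacobi theta transformation $\sum_k(-1)^k(2k+1)e^{-(2k+1)^2\pi t}=(2t)^{-3/2}\sum_k(-1)^k(2k+1)e^{-(2k+1)^2\pi/(4t)}$ (equivalently a functional equation for $\theta_1'$), applied with the appropriate $t$; the window on $\eta$ is exactly what is needed so that in one form the $k=0$ term dominates with controllable tail and in the other the transformed series converges rapidly, making both expansions asymptotic. The main obstacle throughout is bookkeeping: propagating enough terms of the Stirling expansion through Lemma~\ref{lemma:central-binom-asy} and Lemma~\ref{lemma:summands-T}, through the division of two asymptotic series, and through the final re-expansion $m=(n+1)/2$, so that the displayed rational multiples of $\sqrt{2\pi}$, $G$, $\pi^3$ come out correctly — none of it conceptually hard, but error-prone. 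For the local limit theorem the subtler point is justifying that the $O((1+\tau_{h,k})/m)$ relative errors, summed against the alternating series, remain lower order uniformly in the stated $\eta$-range, and that the truncation to $\tau_{h,k}\le m^{2/3}$ costs only a super-polynomially small error.
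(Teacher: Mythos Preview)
Your proposal is correct and follows essentially the same route as the paper: assemble \eqref{eq:asy-p-prob} and the moment expansions by plugging Lemma~\ref{lemma:summands-T} into \eqref{eq:prob-T-asy}, divide numerator by denominator as formal series, substitute $m=(n+1)/2$, and for the local limit theorem keep $h$ fixed, truncate at $\tau_{h,k}\le m^{2/3}$, apply Lemma~\ref{lemma:central-binom-asy} term by term, and use the two theta representations to control the main term on the two halves of the $\eta$-window. The only blemish is a muddled intermediate line in your local-limit computation (you momentarily write $\frac{8\eta}{\pi\sqrt n}\sum_k\cdots$ as if it were $p_n^{(h)}$ rather than $p_n^{(h)}/p_n$), but the final conclusion is right and the paper's own argument matches yours step for step.
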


\begin{remark}
The fact that the two series in~\eqref{eq:theta1_p} and~\eqref{eq:theta2_p} that represent the density $\phi(\eta)$ are equal is a simple consequence of the Poisson sum formula. We also note that the asymptotic behavior of the moments of $H_n$ readily implies that the normalized random variable $H_n/\sqrt{n}$ converges weakly to the distribution whose density is given by $\phi(\eta)$ (see~\cite[Theorem C.2]{Flajolet-Sedgewick:ta:analy}). The local limit theorem~\eqref{eq:theta1_p} is somewhat stronger.
\end{remark}

\begin{proof}
  With \eqref{eq:prob-T-asy} and the result of
  Lemma~\ref{lemma:summands-T}, obtaining an asymptotic expansion of
  $p_{2m-1}$ is only a question of developing the shifted central
  binomial coefficient and multiplying with the correct growth
  contributions from \eqref{eq:asymptotic-summands-T}. By doing so
  (with the help of SageMath
  \cite{Stein-others:2015:sage-mathem-6.5}: a worksheet containing
  these computations as well as some numerical comparisons can be found at
  \url{http://arxiv.org/src/1503.08790/anc/random-walk_NN.ipynb}), an
  asymptotic expansion in the half-integer $m$ is
  obtained. Substituting $m = (n+1)/2$ then gives \eqref{eq:asy-p-prob}. 

  The results in \eqref{eq:asy-p-exp} and \eqref{eq:asy-p-var} are
  obtained by considering
  \[ \E(H_{n}+1)^{r} = \frac{\sum_{h\geq 0} (h+1)^{r}
    p_{n}^{(h)}}{p_{n}},  \]
  making use of \eqref{eq:prob-T-asy} and Lemma~\ref{lemma:summands-T} again. Note that we have $\E H_{n} = \E(H_{n}+1) - 1$,
  as well as $\V H_{n} = \E(H_{n}+1)^{2} - [\E(H_{n}+1)]^{2}$. For higher moments, we only give the principal term of the asymptotics, which corresponds to the coefficient $c_{00}$ in~\eqref{eq:prob-T-asy}, but in principle it would be possible to calculate further terms as well.

It remains to prove~\eqref{eq:theta1_p}. To this end, we revisit the explicit expression (recall that we set $n = 2m-1$)
\begin{equation*}
    p_{2m-1}^{(h)} = \frac{4}{4^{m}} \sum_{k\geq 0} (-1)^{k}
    \frac{\tau_{h,k}}{m} \binom{2m}{m-\tau_{h,k}}.
\end{equation*}
First of all, we can eliminate all $k$ with $\tau_{h,k} > m^{2/3}$, since their total contribution is at most $O(m \exp(-m^{1/3}))$ as before. For all other values of $k$, we replace the binomial coefficient according to Lemma~\ref{lemma:central-binom-asy} by
$$\binom{2m}{m-\tau_{h,k}} = \frac{4^m}{\sqrt{\pi m}} \exp  \Big( -\frac{\tau_{h,k}^2}{m} \Big) \Big(1 + O \Big( \frac{1+\tau_{h,k}}{m} \Big) \Big).$$
Note here that
$$\tau_{h,k} = \frac{(h+1)(2k+1)}2 = \frac{h}{2} (2k+1) \Big( 1 + O\Big( \frac{1}{h} \Big) \Big),$$
and likewise
$$\frac{\tau_{h,k}^2}{m} = \frac{h^2(2k+1)^2}{2n}  \Big( 1 + O \Big( \frac{1}{h} + \frac{1}{n}\Big) \Big).$$
It follows that
$$ \frac{\tau_{h,k}}{m}  \exp  \Big( - \frac{\tau_{h,k}^2}{m} \Big) = \frac{h(2k+1)}{n} \exp \Big( - \frac{h^2(2k+1)^2}{2n} \Big) \Big( 1 + O \Big( \frac{1}{h} + \frac{hk^2+1}{n}\Big) \Big).$$
We are assuming that $\tau_{h,k} \leq m^{2/3} = ((n+1)/2)^{2/3}$, which implies $hk^2/n = O(n^{1/3}/h)$. In view of our assumptions on $h$, this means that the error term is $O(n^{-1/6} \sqrt{\log n})$. Thus we have
\begin{multline*}
p_n^{(h)} = p_{2m-1}^{(h)} = \frac{4\sqrt{2}h}{\sqrt{\pi n^3}}\\\times \sum_{\substack{k \geq 0 \\ \tau_{h,k} \leq ((n+1)/2)^{2/3}}} (-1)^k (2k+1)  \exp \Big( - \frac{h^2(2k+1)^2}{2n} \Big) \Big( 1 + O \Big( \frac{\sqrt{\log n}}{n^{1/6}}\Big)\Big) \\
+ O\big(n \exp(-(n/2)^{1/3}) \big).
\end{multline*}
Adding all terms $\tau_{h,k} > m^{2/3} = ((n+1)/2)^{2/3}$ back only results in a negligible contribution that decays faster than any power of $n$ again, but we need to be careful with the $O$-term inside the sum, as we have to bound the accumulated error by the sum of the absolute values. We have
$$\sum_{k \geq 0} (2k+1) \exp \Big( - \frac{h^2(2k+1)^2}{2n} \Big) = O(n/h^2),$$
which can be seen e.g. by approximating the sum by an integral (or by means of the Mellin transform again), so 
\begin{align*}
p_n^{(h)} &= \frac{4\sqrt{2}h}{\sqrt{\pi n^3}} \sum_{k \geq 0} (-1)^k (2k+1)  \exp \Big(- \frac{h^2(2k+1)^2}{2n} \Big) + O\Big( \frac{\sqrt{\log n}}{hn^{2/3}} \Big) \\
&=\frac{4\sqrt{2}\eta}{\sqrt{\pi} n} \sum_{k \geq 0} (-1)^k (2k+1)  \exp \Big(- \frac{h^2(2k+1)^2}{2n} \Big) + O\Big( \frac{\log n}{n^{7/6}} \Big).
\end{align*}
Since $p_n = \sqrt{\frac{\pi}{2n}}(1+O(n^{-1}))$, this yields
\begin{align*}
\frac{p_n^{(h)}}{p_n} &= \frac{8\eta}{\pi \sqrt{n}} \sum_{k \geq 0} (-1)^k (2k+1)  \exp \Big(- \frac{\eta^2(2k+1)^2}{2} \Big) + O\Big( \frac{\log n}{n^{2/3}} \Big) \\
&= \frac{2\phi(\eta)}{\sqrt{n}} + O\Big( \frac{\log n}{n^{2/3}} \Big).
\end{align*}
For $\eta \geq 1$, the sum is bounded below by a constant multiple of $\exp(-\eta^2/2)$ (as can be seen by bounding the sum of all terms with $k \geq 1$), which in turn is at least $\exp(-(\log n)/8) = n^{-1/8}$ by our assumptions on $\eta$. Thus the first term indeed dominates the error term in this case. If $\eta < 1$, we use the alternative representation~\eqref{eq:theta2_p}, which shows that $\phi(\eta)$ is bounded below by a constant multiple of $\eta^{-2} \exp(-\pi^2/(8\eta)^2)$. This in turn is at least $(1/9)n^{-\pi^2/72}\log n$ by the assumptions on $\eta$, and since $\pi^2/72 < 1/6$, we can draw the same conclusion.
\end{proof}
\begin{remark}
  As stated in the introduction, the number $2^{n}p_{n}$ gives the
  number of extremal lattice paths on $\Z$---and thus, with the
  asymptotic expansion of $p_{n}$, we also have an asymptotic
  expansion for the number of extremal lattice paths on $\Z$ of given
  length.
\end{remark}

This concludes our analysis of admissible random walks on
$\N_{0}$. In the next section, we investigate admissible random walks
on $\Z$.

\section{Ballot Sequences and Admissible Random Walks on
  \texorpdfstring{$\Z$}{Z}}\label{sec:RW-Z} 

In principle, the approach we follow for the analysis of the
asymptotic behavior of admissible random walks on $\Z$ is the same as
in the previous section. However, due to the different structure of
\eqref{eq:prob-expl-U}, some steps will need to be adapted.

With the notation of Lemma~\ref{lemma:central-binom-asy}, we are able
to express $q_{2m-2}$ for a half-integer $m\in \frac{1}{2}\N$ with
$m\geq 1$ as  
\begin{equation}\label{eq:prob-U-asy}
q_{2m-2} \sim \frac{4}{\sqrt{m\pi}} \frac{1}{2m-1}\hspace{-1em} \sum_{\substack{h,k\geq 0
  \\ h\equiv 2m\bmod 2}} \hspace{-1em} \frac{2\upsilon_{h,k}^{2} - m}{m}
\exp\Big(- \frac{\upsilon_{h,k}^{2}}{m}\Big) \sum_{\ell,
  j\geq 0} c_{\ell, j} \frac{\upsilon_{h,k}^{2j}}{m^{\ell}}.
\end{equation}
In analogy to our investigation of admissible random
walks on $\N_{0}$, we also want to determine the expected height and
variance of admissible random walks. These are related to the random
variable $\widetilde H_{n}$, which we defined by
\[ \Prob(\widetilde H_{n} = h) = \frac{q_{n}^{(h)}}{q_{n}}.  \]
To make things easier, we will investigate moments of the form
$\E(\widetilde H_{n} + 2)^{r}$. They can be computed by
\[ \E(\widetilde H_{n} + 2)^{r} = \sum_{h\geq 0} (h+2)^{r} \Prob(\widetilde
H_{n} = h) = \frac{\sum_{h\geq 0} (h+2)^{r} q_{n}^{(h)}}{q_{n}}.  \]
Therefore, we are interested in the asymptotic contribution of
\[ \sum_{\substack{h,k\geq 0 \\ h\equiv 2m\bmod 2}} \hspace{-1em}
\frac{2\upsilon_{h,k}^{2} - m}{m} \upsilon_{h,k}^{2j} (h+2)^{r}
\exp\Big(-\frac{\upsilon_{h,k}^{2}}{m}\Big),  \] 
which is discussed in the following lemma.

\begin{lemma}
  Let $K > 0$ be fixed. Then we have the asymptotic expansion
  \begin{equation}\label{eq:q-trans-1}
    \sum_{\substack{h,k\geq 0 \\ h\equiv 2m\bmod 2}} \hspace{-1em}
    \frac{2\upsilon_{h,k}^{2} - m}{m}
    \exp\Big(-\frac{\upsilon_{h,k}^{2}}{m}\Big) = \frac{\sqrt{m\pi}}{4}  + O(m^{-K}).
  \end{equation}
  For $j\in \N$ we have
  \begin{multline}\label{eq:q-trans-2}
    \sum_{\substack{h,k\geq 0 \\ h\equiv 2m\bmod 2}} \hspace{-1em}
    \frac{2\upsilon_{h,k}^{2} - m}{m} \upsilon_{h,k}^{2j}
    \exp\Big(-\frac{\upsilon_{h,k}^{2}}{m}\Big) \\ =  
    \Big(\frac{\log m}{2} + 2\gamma + \log 2 + \frac{1}{2} \psi\Big(j +
    \frac{1}{2}\Big) + \frac{1}{2j} + \llbracket m\not\in \N \rrbracket\cdot (2\log
    2 - 2)\Big) \\ \times \frac{j}{2} \Gamma\Big(j + \frac{1}{2}\Big)
    m^{j+1/2} + O(m^{-K})
  \end{multline}
  where $\psi(s)$ is the digamma function. Finally, for
  $j\in\N_{0}$, $r\in \N$ we find 
  \begin{multline}\label{eq:q-trans-3}
    \sum_{\substack{h,k\geq 0 \\ h\equiv 2m\bmod 2}} \hspace{-1em}
    \frac{2\upsilon_{h,k}^{2} - m}{m} \upsilon_{h,k}^{2j} (h+2)^{r}
    \exp\Big(-\frac{\upsilon_{h,k}^{2}}{m}\Big) \\
    = j \Gamma\Big(j + \frac{1}{2}\Big) \kappa_{2m}(1-r)
    m^{j+1/2} \\ + \frac{1}{2} \Big(j + \frac{r}{2}\Big)
    \Gamma\Big(j + \frac{r+1}{2}\Big) (2^{r+1}
    - 1) \zeta(r+1) m^{j+(r+1)/2} + O(m^{-K}),
  \end{multline}
  where $\kappa_{2m}(s) = 2^{-s} \zeta(s)$ for $m\in \N$ and
  $\kappa_{2m}(s) = (1-2^{-s}) \zeta(s) - 1$ otherwise.
\end{lemma}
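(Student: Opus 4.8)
The plan is to mimic the Mellin-transform argument from the proof of Lemma~\ref{lemma:summands-T}; the only genuinely new ingredient is the prefactor $\tfrac{2\upsilon_{h,k}^{2}-m}{m}$, which after the substitution $m=x^{-2}$ becomes $2\upsilon_{h,k}^{2}x^{2}-1$, so that each of the three sums turns into a difference of two harmonic sums built from the base functions $e^{-t^{2}}$ and $t^{2}e^{-t^{2}}$. Concretely, putting $m=x^{-2}$, the left-hand side of \eqref{eq:q-trans-3} — of which \eqref{eq:q-trans-1} and \eqref{eq:q-trans-2} are the special cases $j=r=0$ and $r=0$ — becomes
\[ f(x) = 2\!\!\sum_{\substack{h,k\ge 0\\ h\equiv 2m\bmod 2}}\!\!\upsilon_{h,k}^{2j+2}(h+2)^{r}\,x^{2}e^{-\upsilon_{h,k}^{2}x^{2}}\;-\;\sum_{\substack{h,k\ge 0\\ h\equiv 2m\bmod 2}}\!\!\upsilon_{h,k}^{2j}(h+2)^{r}\,e^{-\upsilon_{h,k}^{2}x^{2}}. \]
Both sums are harmonic sums with frequencies $\upsilon_{h,k}$ and common amplitudes $\upsilon_{h,k}^{2j}(h+2)^{r}$; since $\int_{0}^{\infty}t^{s-1}e^{-t^{2}}\,dt=\tfrac12\Gamma(s/2)$ and $\int_{0}^{\infty}t^{s-1}t^{2}e^{-t^{2}}\,dt=\tfrac12\Gamma(s/2+1)=\tfrac{s}{4}\Gamma(s/2)$, the factorisation of the Mellin transform of a harmonic sum (\cite[Lemma~2]{Flajolet-Gourdon-Dumas:1995:mellin}) yields, in a right half-plane,
\[ f^{*}(s)=\frac{s-1}{2}\,\Gamma(s/2)\,\Lambda(s),\qquad \Lambda(s)=\!\!\sum_{\substack{h,k\ge 0\\ h\equiv 2m\bmod 2}}\!\!\upsilon_{h,k}^{2j-s}(h+2)^{r}. \]
(Equivalently $f(x)=-\tfrac{d}{dx}\sum_{h,k}\upsilon_{h,k}^{2j-1}(h+2)^{r}\,xe^{-\upsilon_{h,k}^{2}x^{2}}$, which gives the same $f^{*}$ through the differentiation rule for the Mellin transform.)

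Next I would simplify $\Lambda(s)$ exactly as in the proof of Lemma~\ref{lemma:summands-T}: writing $\upsilon_{h,k}=(h+2)(2k+1)/2$, separating the sums over $h$ and $k$, and using $\sum_{k\ge0}(2k+1)^{-w}=(1-2^{-w})\zeta(w)$, one obtains
\[ \Lambda(s)=\bigl(2^{\,s-2j}-1\bigr)\,\zeta(s-2j)\,\kappa_{2m}(s-2j-r), \]
where $\kappa_{2m}$ is the function appearing in the statement of the lemma. Hence $f^{*}$ is meromorphic on $\C$, and since $\Lambda$ has polynomial growth on vertical lines while $\Gamma(s/2)$ decays exponentially, the converse mapping theorem (\cite[Theorem~4]{Flajolet-Gourdon-Dumas:1995:mellin}) yields the behaviour of $f(x)$ as $x\to0$, i.e.\ of the sum as $m\to\infty$, from the poles of $f^{*}$. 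The structural facts I would establish are: the only poles of $f^{*}$ come from the pole of $\zeta$ at $1$ — entering through $\zeta(s-2j)$ at $s=2j+1$ and through $\kappa_{2m}(s-2j-r)$ at $s=2j+r+1$ — while the poles of $\Gamma(s/2)$ at $s=0,-2,-4,\dots$ are all cancelled, by the trivial zeros of $\zeta(s-2j)$ (and, at $s=0$ when $j=0$, by the zero of $2^{\,s-2j}-1$). Consequently $f^{*}$ continues without further poles arbitrarily far to the left, which produces the error term $O(m^{-K})$ for every $K$.

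It then remains to read off the residue contributions, which is where the three cases split. When $r\ge1$ (for \eqref{eq:q-trans-3}) the two poles $s=2j+1$ and $s=2j+r+1$ are distinct and simple — the first being \emph{absent} when $j=0$ because the factor $\tfrac{s-1}{2}$ vanishes there — and computing $\Res(f^{*}(s)x^{-s})$ at each, with $\Res_{w=1}\zeta(w)=1$ and $\Res_{w=1}\kappa_{2m}(w)=\tfrac12$ (for both parities), reproduces the two stated terms. When $j=r=0$ (for \eqref{eq:q-trans-1}) the two poles coincide at $s=1$: there $\zeta(s)\kappa_{2m}(s)$ has a double pole, but the prefactor $\tfrac{s-1}{2}$ lowers it to a simple pole of residue $\tfrac{\sqrt\pi}{4}$, giving $\tfrac{\sqrt{m\pi}}{4}$. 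The delicate case is \eqref{eq:q-trans-2} ($r=0$, $j\ge1$): now the two poles again coincide, at $s=2j+1$, but $\tfrac{s-1}{2}=j\neq0$ there, so $f^{*}$ genuinely has a double pole — this is the source of the $\log m$. Here I would expand all five factors $\tfrac{s-1}{2}$, $\Gamma(s/2)$, $2^{\,s-2j}-1$, $\zeta(s-2j)$ and $\kappa_{2m}(s-2j)$ into their Laurent/Taylor series about $s=2j+1$ up to the relevant order (using $\zeta(1+u)=u^{-1}+\gamma+O(u)$, $\Gamma(j+\tfrac12+u/2)=\Gamma(j+\tfrac12)(1+\tfrac u2\psi(j+\tfrac12)+O(u^{2}))$, and the two shapes of $\kappa_{2m}(1+u)$ for $m\in\N$ and $m\notin\N$), multiply out, and collect the coefficients of $(s-2j-1)^{-2}$ and $(s-2j-1)^{-1}$; combined with $x^{-s}=m^{s/2}(1-\tfrac{s-2j-1}{2}\log m+\cdots)$ at $s=2j+1$, the former gives the $\tfrac{\log m}{2}$-term with coefficient $\tfrac j2\Gamma(j+\tfrac12)m^{j+1/2}$ and the latter the bracketed constant, the summand $\llbracket m\notin\N\rrbracket\,(2\log2-2)$ arising solely from the difference of the two expansions of $\kappa_{2m}$ near its pole. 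This last computation — keeping track of the product of five series and of the parity dependence — is the main obstacle; everything else transcribes directly from the proof of Lemma~\ref{lemma:summands-T}.
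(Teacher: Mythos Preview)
Your proposal is correct and follows essentially the same approach as the paper: the substitution $m=x^{-2}$, the split into two harmonic sums, the resulting Mellin transform $f^{*}(s)=\tfrac{s-1}{2}\Gamma(s/2)(2^{s-2j}-1)\zeta(s-2j)\kappa_{2m}(s-2j-r)$, the cancellation of the $\Gamma$-poles, and the three-way case distinction for the residues are exactly what the paper does. Your presentation is marginally tidier in two spots---you absorb the $x^{2}$ prefactor into the base function $t^{2}e^{-t^{2}}$ rather than using the shift rule $2f_{1}^{*}(s+2)$, and you explicitly note that for $r\ge1$, $j=0$ the pole at $s=1$ is killed by the factor $\tfrac{s-1}{2}$ (the paper's formula still gives the right answer since the coefficient $j$ vanishes, but it does not say so)---but these are cosmetic differences, not a different route.
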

\begin{proof}
  Let $j$, $r\in \N_{0}$. We want to analyze the sum
  \[ \sum_{\substack{h,k\geq 0 \\ h\equiv 2m\bmod 2}} \hspace{-1em}
  \frac{2\upsilon_{h,k}^{2} - m}{m} \upsilon_{h,k}^{2j} (h+2)^{r}
  \exp\Big(-\frac{\upsilon_{h,k}^{2}}{m}\Big) \]
  asymptotically, where $m$ is a half-integer in
  $\frac{1}{2}\N$ with $m\geq 1$. 

In analogy to the proof of Lemma~\ref{lemma:summands-T}, we substitute $x^{-2} = m$, so that the sum becomes
  \begin{align*}
    f(x) & := \sum_{\substack{h,k\geq 0 \\ h\equiv 2m \bmod 2}}
           \hspace{-1em} (2x^{2} \upsilon_{h,k}^{2} - 1) \upsilon_{h,k}^{2j}
           (h+2)^{r} \exp(-\upsilon_{h,k}^{2} x^{2}) \\
         & = 2x^{2} \hspace{-1em}\sum_{\substack{h,k\geq 0 \\ h\equiv 2m \bmod 2}}
           \hspace{-1em}  \upsilon_{h,k}^{2j+2}
           (h+2)^{r} \exp(-\upsilon_{h,k}^{2} x^{2})  -\hspace{-1em}
           \sum_{\substack{h,k\geq 0 \\ h\equiv 2m \bmod 2}}
           \hspace{-1em} \upsilon_{h,k}^{2j} (h+2)^{r}
           \exp(-\upsilon_{h,k}^{2} x^{2}) \\
         & =: 2x^{2} f_{1}(x) - f_{2}(x).
  \end{align*}
Both $f_1$ and $f_2$ are harmonic sums, and we determine their Mellin transforms as we did earlier in the proof of Lemma~\ref{lemma:summands-T}. 
  By elementary properties of the Mellin transform, we know that
  $f^{*}(s) = 2f_{1}^{*}(s+2) - f_{2}^{*}(s)$. Let $\Lambda_{1}$ and
  $\Lambda_{2}$ be the Dirichlet series associated with the harmonic sums
  $f_{1}(x)$ and $f_{2}(x)$, respectively. We find
  \begin{align*}
    \Lambda_{1}(s) & = \hspace{-1em}\sum_{\substack{h,k\geq 0 \\ h\equiv 2m \bmod 2}}
           \hspace{-1em} \upsilon_{h,k}^{2j+2-s} (h+2)^{r} = 
                     2^{s-(2j+2)} \hspace{-1em} \sum_{\substack{h,k\geq 0 \\
                     h\equiv 2m \bmod 2}} \hspace{-1em}
                     (h+2)^{2j+2+r-s} (2k+1)^{2j+2-s}\\
                   & = (2^{s-(2j+2)} - 1) 
                     \zeta(s-(2j+2)) \hspace{-1em}\sum_{\substack{h\geq 0
                     \\ h\equiv 2m \bmod 2}} \hspace{-1em}
                     (h+2)^{2j+2+r-s}. 
  \end{align*}
  We investigate the sum over $h$ separately, and obtain
  \[ \kappa_{2m}(s) := \sum_{\substack{h\geq 0 \\ h\equiv 2m\bmod
    2}}\hspace{-1em} (h+2)^{-s} = \begin{cases} 2^{-s} 
    \zeta(s) & \text{ for } m\in \N, \\
    (1-2^{-s})\zeta(s) - 1 & \text{ for } m\not\in \N. \end{cases} \]
  Therefore, we find the Mellin transform of the first harmonic sum to be
  \begin{align*} 
    f_{1}^{*}(s) &= \frac{1}{2}\Gamma\Big(\frac{s}{2}\Big)
                   \Lambda_{1}(s) = \frac{1}{2} \Gamma\Big(\frac{s}{2}\Big)
                   (2^{s-2j-2} -1) \zeta(s-2j-2) \kappa_{2m}(s-(2j+r)-2).
  \end{align*}
  The Mellin transform of the second sum can be found in a completely
  analogous way: we have
  \[ f_{2}^{*}(s) = \frac{1}{2} \Gamma\Big(\frac{s}{2}\Big)
  (2^{s-2j} - 1) \zeta(s-2j) \kappa_{2m}(s - (2j+r)). \]
  Altogether, this yields the Mellin transform
  \begin{align*}
    f^{*}(s) & = 2 f_{1}^{*}(s+2) - f_{2}^{*}(s) \\ 
             & = \frac{s-1}{2} \Gamma\Big(\frac{s}{2}\Big)
               (2^{s-2j} -1) \zeta(s-2j) \kappa_{2m}(s-(2j+r)).
  \end{align*}
  As in the proof of Lemma~\ref{lemma:summands-T}, the growth
  conditions necessary for application of the converse mapping theorem
  \cite[Theorem~4]{Flajolet-Gourdon-Dumas:1995:mellin} hold. 

  In order to analyze the poles of $f^{*}(s)$, we need to distinguish
  three cases, as $\zeta(s-2j)$ has a simple pole at $s=2j+1$ and
  $\kappa_{2m}(s-(2j+r))$ has a simple pole at $s=2j+r+1$. The poles
  of $\Gamma(s/2)$ at even $s \leq 0$ are canceled by the zeros of
  $\zeta(s-2j)$, unless $s = j = 0$. In that case, the pole is
  canceled by the factor $(2^{s-2j} - 1)$.

  First, let $r = j = 0$. Then, $f^{*}(s)$ has a simple pole at $s =
  1$, because one of the poles of $\zeta(s)$ or $\kappa_{2m}(s)$
  cancels against the zero of $(s-1)$. Here, the residue of $f^{*}(s)$
  is given by $\sqrt{\pi}/4$, which translates to a contribution of
  $\sqrt{m \pi}/4$. This proves \eqref{eq:q-trans-1}.

  Second, for $r = 0$ and $j > 0$, the function $f^{*}(s)$ has a
  pole of order $2$ at $s = 2j+1$. By expanding all the occurring
  functions, we find the Laurent expansion
  \[ f^{*}(s) \asymp \begin{cases} 
    \frac{j}{2} \Gamma\left(j + \frac{1}{2}\right)
    \Big[\frac{1}{(s-(2j+1))^{2}} + \frac{\frac{1}{2} \psi(j+\frac{1}{2}) +
        2\gamma + \log 2 + \frac{1}{2j}}{s - (2j+1)}\Big] + O(1) 
    & \text{ for } m\in \N, \\
    \frac{j}{2} \Gamma\left(j + \frac{1}{2}\right)
    \Big[\frac{1}{(s-(2j+1))^{2}} + \frac{\frac{1}{2} \psi(j+\frac{1}{2}) +
        2\gamma + 3\log 2 - 2 + \frac{1}{2j}}{s - (2j+1)}\Big] + O(1)
    & \text{ for } m\not\in \N,
  \end{cases} \]
  where $\psi(s)$ is the digamma function (cf.~\DLMF{5.2}{2}, see
  \cite[\href{http://dlmf.nist.gov/5.4.ii}{\S 5.4(ii)}]{NIST:DLMF} for
  special values). As the pole of order $2$ contributes the factor
  $\frac{1}{2} m^{j+1/2} \log m$, and the pole of order $1$ gives
  $m^{j+1/2}$, \eqref{eq:q-trans-2} is proved. 

  Finally, consider $r > 0$. In this case we have two separate single
  poles at $s = 2j+1$ and $s = 2j+r+1$. Computing the residues gives
  the growth contribution
  \begin{multline*} j\Gamma\Big(j + \frac{1}{2}\Big) \kappa_{2m}(1-r)
  m^{j+1/2}  + \Big(j+ \frac{r}{2}\Big)\Gamma\Big(j +
    \frac{r+1}{2}\Big) \Big(2^{r} - \frac{1}{2}\Big) \zeta(r+1)
  m^{j+(r+1)/2},
  \end{multline*}
  which proves~\eqref{eq:q-trans-3}.
\end{proof}

Fortunately, when explicitly computing the expansion, all the logarithmic terms cancel out
and we obtain the same behavior for admissible paths of even and odd length. The following
theorem summarizes our findings.

\begin{theorem}[Asymptotic analysis of admissible random walks on
  $\Z$]\label{thm:asy-ZZ}
  The probability that a random walk on $\Z$ is admissible has the
  asymptotic expansion
  \begin{equation}\label{eq:asy-q-prob}
    q_{n} = \frac{1}{n} - \frac{4}{3n^{2}} + \frac{88}{45n^{3}} -
    \frac{976}{315n^{4}} +  \frac{3488}{675n^{5}} - \frac{276928}{31185n^{6}} + O\Big(\frac{1}{n^{7}}\Big).
  \end{equation}
  The expected height of admissible random walks on $\Z$ is given by
  \begin{equation}
    \E\widetilde H_{n}  = \frac{\sqrt{2 \pi^{3}}}{4} \sqrt{n} - 2 +
  \frac{3 \sqrt{2 \pi^{3}}}{16 \sqrt{n}} - \frac{539\sqrt{2
      \pi^{3}}}{5760 \sqrt{n^{3}}} +
  \frac{50713\sqrt{2\pi^{3}}}{483840 \sqrt{n^{5}}} +
  O\Big(\frac{1}{\sqrt{n^{7}}}\Big),
  \end{equation}
  where $\sqrt{2\pi^{3}}/4 \approx 1.96870$, and the variance of
  $\widetilde H_{n}$ can be expressed as 
  \begin{multline} 
    \V \widetilde H_{n} = \frac{28\zeta(3) - \pi^{3}}{8} n +
    \frac{224\zeta(3) - 9\pi^{3}}{48} - \frac{1792\zeta(3) -
      67\pi^{3}}{2880n} \\ + \frac{107520\zeta(3) - 4189\pi^{3}}{120960
      n^{2}} + O\Big(\frac{1}{n^{3}}\Big),
  \end{multline}
  where $(28\zeta(3) - \pi^{3})/8 \approx 0.33141$. Generally, the $r$-th moment is asymptotically given by
\begin{equation}
\E \widetilde H_n^r \sim \frac{r}{\sqrt{\pi}} \Gamma \Big( \frac{r+1}{2} \Big) (2^{r+1}-1)2^{-r/2} \zeta(r+1) n^{r/2}.
\end{equation}
Moreover, if $\eta = h/\sqrt{n}$ satisfies $6/\sqrt{\log n} < \eta < \sqrt{\log n}/2$, we have the local limit theorem
\begin{align}\label{eq:theta1}
\Prob(\widetilde{H}_n = h) = \frac{q_n^{(h)}}{q_n} \sim \frac{2\chi(\eta)}{\sqrt{n}} &= \frac{4\sqrt{2}}{\sqrt{\pi n}} \sum_{k \geq 0} ((2k+1)^2 \eta^2-1) \exp \Big( -\frac{(2k+1)^2\eta^2}{2} \Big) \\\label{eq:theta2}
&= \frac{4\pi^2}{\eta^3\sqrt{n}} \sum_{k \geq 1} (-1)^{k-1} k^2 \exp \Big( -\frac{\pi^2k^2}{2\eta^2} \Big).
\end{align}
\end{theorem}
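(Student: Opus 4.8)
The plan is to run exactly the same machinery as in the proof of Theorem~\ref{thm:asy-NN}, now starting from the representation~\eqref{eq:prob-U-asy} --- in which the shifted central binomial coefficient of~\eqref{eq:prob-expl-U} has already been expanded via Lemma~\ref{lemma:central-binom-asy} --- and using the lemma that immediately precedes this theorem (equations~\eqref{eq:q-trans-1}--\eqref{eq:q-trans-3}) in place of Lemma~\ref{lemma:summands-T}. For the expansion~\eqref{eq:asy-q-prob} of $q_{n}$ itself, I would replace each sum $\sum_{h,k}\frac{2\upsilon_{h,k}^{2}-m}{m}\upsilon_{h,k}^{2j}\exp(-\upsilon_{h,k}^{2}/m)$ appearing in~\eqref{eq:prob-U-asy} by its asymptotic value from~\eqref{eq:q-trans-1} (for $j=0$) or~\eqref{eq:q-trans-2} (for $j\geq1$). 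A priori the contributions coming from~\eqref{eq:q-trans-2} carry a $\log m$ factor, and the two parity classes of $m$ look different; but, as announced just before the theorem, once everything is weighted by the coefficients $c_{\ell j}$ of Lemma~\ref{lemma:central-binom-asy} and summed, the logarithmic parts cancel and both parities give the same answer. I would verify this cancellation explicitly (most conveniently with a computer algebra system, exactly as in the $\N_{0}$ case) and then substitute $m=(n+2)/2$ to obtain~\eqref{eq:asy-q-prob}.

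For the moments I would write $\E(\widetilde H_{n}+2)^{r}=\big(\sum_{h\geq0}(h+2)^{r}q_{n}^{(h)}\big)/q_{n}$, expand the numerator exactly as for $q_{n}$ but keeping the extra factor $(h+2)^{r}$, and invoke~\eqref{eq:q-trans-3}; dividing by the expansion of $q_{n}$ already found, and using $\E\widetilde H_{n}=\E(\widetilde H_{n}+2)-2$ and $\V\widetilde H_{n}=\E(\widetilde H_{n}+2)^{2}-[\E(\widetilde H_{n}+2)]^{2}$, gives the stated expansions for the expected height and the variance (again with computer-algebra assistance for the bookkeeping). The principal term of $\E\widetilde H_{n}^{r}$ comes solely from the leading coefficient $c_{00}=1$ in~\eqref{eq:prob-U-asy} together with the $j=0$ part of~\eqref{eq:q-trans-3}: the summand $j\,\Gamma(j+\tfrac12)\kappa_{2m}(1-r)m^{j+1/2}$ vanishes at $j=0$, so what survives is $\tfrac12\cdot\tfrac r2\Gamma\big(\tfrac{r+1}{2}\big)(2^{r+1}-1)\zeta(r+1)m^{(r+1)/2}$, and combining this with the prefactor $\tfrac{4}{\sqrt{m\pi}}\cdot\tfrac1{2m-1}$ from~\eqref{eq:prob-U-asy}, dividing by $q_{n}\sim 1/n$, and substituting $m\sim n/2$ reproduces $\tfrac{r}{\sqrt\pi}\Gamma\big(\tfrac{r+1}{2}\big)(2^{r+1}-1)2^{-r/2}\zeta(r+1)\,n^{r/2}$.

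For the local limit theorem~\eqref{eq:theta1} I would mirror the proof of the corresponding statement~\eqref{eq:theta1_p}. Starting from the exact identity $q_{2m-2}^{(h)}=\tfrac{4}{4^{m}}\sum_{k\geq0}\tfrac{2\upsilon_{h,k}^{2}-m}{(2m-1)m}\binom{2m}{m-\upsilon_{h,k}}$, I would discard the tail $\upsilon_{h,k}>m^{2/3}$ (contributing $O(m\exp(-m^{1/3}))$) and replace the remaining binomial coefficients by $\tfrac{4^{m}}{\sqrt{\pi m}}\exp(-\upsilon_{h,k}^{2}/m)\big(1+O(\tfrac{1+\upsilon_{h,k}}{m})\big)$ via Lemma~\ref{lemma:central-binom-asy}. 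With $\upsilon_{h,k}=\tfrac{h}{2}(2k+1)\big(1+O(1/h)\big)$ and $m=(n+2)/2$, one gets $\upsilon_{h,k}^{2}/m=\tfrac12(2k+1)^{2}\eta^{2}\big(1+O(1/h+1/n)\big)$ and $\tfrac{2\upsilon_{h,k}^{2}-m}{(2m-1)m}=\tfrac1n\big((2k+1)^{2}\eta^{2}-1\big)+(\text{lower order})$ with $\eta=h/\sqrt n$, and assembling these yields
\[
q_{n}^{(h)}=\frac{4\sqrt2}{\sqrt\pi\,n^{3/2}}\sum_{k\geq0}\big((2k+1)^{2}\eta^{2}-1\big)\exp\Big(-\frac{(2k+1)^{2}\eta^{2}}{2}\Big)+(\text{error}),
\]
so that dividing by $q_{n}=\tfrac1n(1+O(n^{-1}))$ produces $\tfrac{2\chi(\eta)}{\sqrt n}$. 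The equivalent representation~\eqref{eq:theta2} then follows from~\eqref{eq:theta1} by the Poisson summation formula (equivalently, the functional equation of a Jacobi theta function).

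The step I expect to be the main obstacle is, as in the $\N_{0}$ case, controlling the accumulated error in this last computation: one must bound the total contribution of the $O$-term by the sum of absolute values, for which the estimate $\sum_{k\geq0}(2k+1)^{2}\exp(-(2k+1)^{2}\eta^{2}/2)=O(\eta^{-3})$ (seen by comparison with an integral, or again via the Mellin transform) is the key input, and then show that for $6/\sqrt{\log n}<\eta<\sqrt{\log n}/2$ this error is of smaller order than $\chi(\eta)/\sqrt n$. This forces a case split. When $\eta\geq1$, representation~\eqref{eq:theta1} shows that $\chi(\eta)$ is bounded below by a positive constant times $\eta^{2}\exp(-\eta^{2}/2)$, which is large enough. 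When $\eta<1$, the leading term of~\eqref{eq:theta1} essentially cancels, so one passes to~\eqref{eq:theta2}, which shows that $\chi(\eta)$ is bounded below by a positive constant times $\eta^{-3}\exp(-\pi^{2}/(2\eta^{2}))$; the lower restriction $\eta>6/\sqrt{\log n}$ is exactly what makes this exceed the error, since it gives $\exp(-\pi^{2}/(2\eta^{2}))\geq n^{-\pi^{2}/72}$ and $\pi^{2}/72<1/6$.
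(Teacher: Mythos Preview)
Your proposal is correct and follows exactly the approach the paper intends: the paper's own proof reads ``Analogous to Theorem~\ref{thm:asy-NN}'' together with a reference to SageMath computations, and what you have written is a faithful and detailed unpacking of that analogy, including the use of~\eqref{eq:prob-U-asy} with the preceding lemma in place of Lemma~\ref{lemma:summands-T}, the shifted-moment trick $\E(\widetilde H_{n}+2)^{r}$, and the local-limit argument mirroring~\eqref{eq:theta1_p} with the appropriate $O(\eta^{-3})$ bound and the $\eta\lessgtr 1$ case split via~\eqref{eq:theta1}/\eqref{eq:theta2}.
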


\begin{remark}
Again, the two expressions for the limiting density $\chi$ are equivalent, as can be seen by an application of the Poisson sum formula.
\end{remark}

\begin{proof}
  Analogous to Theorem~\ref{thm:asy-NN}. The asymptotic expansions
  were again computed with the help of SageMath
  \cite{Stein-others:2015:sage-mathem-6.5}, and a corresponding
  worksheet (containing these computations as well as some numerical comparisons) can be
  found at \url{http://arxiv.org/src/1503.08790/anc/random-walk_ZZ.ipynb}.
\end{proof}
\begin{remark}
  As every simple symmetric random walk of length $n$ on $\Z$ occurs
  with probability $2^{-n}$, we know that the number of admissible
  random walks on $\Z$ is $2^{n} q_{n}$. Thus, an asymptotic expansion
  for the number of admissible random walks follows directly
  from~\eqref{eq:asy-q-prob} upon multiplication by $2^{n}$. This
  is sequence \href{http://oeis.org/A167510}{A167510} in \cite{OEIS:2015}.
\end{remark}

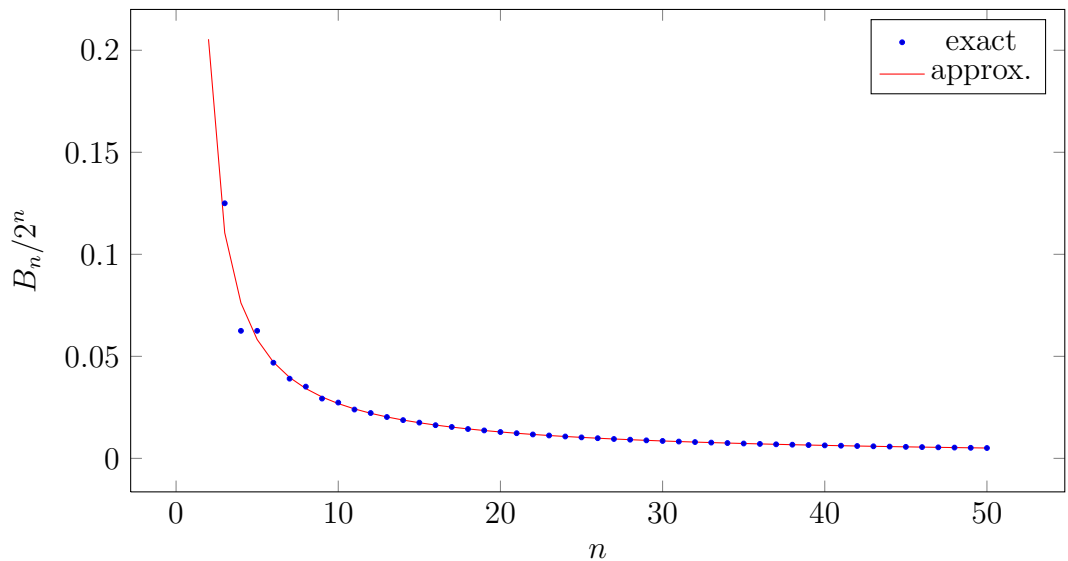
\begin{figure}[b]
  \centering
  \begin{tikzpicture}
    \begin{axis}[xlabel={$n$}, ylabel={$B_{n}/2^{n}$}, width=14cm, height=8cm, ymax=0.22,
      yticklabel style={/pgf/number format/fixed, /pgf/number format/precision=3},
      xtick = {0,10,...,50}, legend entries = {exact, approx.}
      ]
      \addplot+[only marks, mark size=0.9pt] table[x=n, y=exact] {ballot_cmp.dat};
      \addplot+[no marks] table[x=n, y=asy] {ballot_cmp.dat};
    \end{axis}
  \end{tikzpicture}
  \caption{Numerical approximation of $B_n/2^n$}
  \label{fig:ballot-numerical}
\end{figure}

Furthermore, in the introduction we illustrated that admissible random
walks are strongly related to bidirectional ballot sequences. Since
every bidirectional ballot sequence of length $n+2$ 
corresponds to an admissible random walk of length $n$ on $\Z$ (i.e.,
$B_{n} = 2^{n-2} q_{n-2}$), we are able to prove Zhao's conjecture that was mentioned in
the introduction.

\begin{corollary}[Bidirectional ballot walks]\label{cor:zhao}
  The number of bidirectional ballot walks $B_{n}$ of length $n$ can
  be expressed asymptotically as
  \begin{equation}
    \label{eq:ballot-asy}
    B_{n} = 2^{n} \Big(\frac{1}{4n} + \frac{1}{6 n^{2}} +
    \frac{7}{45 n^{3}} + \frac{10}{63 n^{4}} + \frac{764}{4725n^{5}} +
    \frac{4952}{31185n^{6}}\Big) + O\Big(\frac{2^{n}}{n^{7}}\Big).
  \end{equation}
\end{corollary}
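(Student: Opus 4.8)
The plan is to reduce the statement entirely to the asymptotic expansion of $q_n$ that was just established in Theorem~\ref{thm:asy-ZZ}. As recalled immediately before the corollary, the bijection between bidirectional ballot sequences of length $n$ and admissible random walks of length $n-2$ on $\Z$, combined with the fact that every random walk of length $n-2$ on $\Z$ occurs with probability $2^{-(n-2)}$, gives the exact identity $B_n = 2^{n-2} q_{n-2}$. Hence the entire proof amounts to substituting $n \mapsto n-2$ in~\eqref{eq:asy-q-prob} and multiplying by $2^{n-2} = 2^n/4$.

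For the substitution I would use the geometric-type expansion
\[ \frac{1}{(n-2)^k} = \frac{1}{n^k} \Big(1 - \frac{2}{n}\Big)^{-k} = \frac{1}{n^k} \sum_{i \ge 0} \binom{k+i-1}{i} \frac{2^i}{n^i} \]
for $k = 1, \dots, 6$, plug these into
\[ q_{n-2} = \frac{1}{n-2} - \frac{4}{3(n-2)^2} + \frac{88}{45(n-2)^3} - \frac{976}{315(n-2)^4} + \frac{3488}{675(n-2)^5} - \frac{276928}{31185(n-2)^6} + O\Big(\frac{1}{n^7}\Big), \]
and then collect the coefficients of $n^{-1}$ through $n^{-6}$. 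Since the error term in~\eqref{eq:asy-q-prob} is $O(n^{-7})$ and the shift $n\mapsto n-2$ preserves this order for large $n$, the resulting expansion of $q_{n-2}$ is valid to the same precision; multiplying by $2^{n-2}$ and simplifying the rational coefficients then yields~\eqref{eq:ballot-asy}.

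The only real work here is the bookkeeping: to recover all six terms one must expand $1/(n-2)$ up to $n^{-6}$, $1/(n-2)^2$ up to $n^{-6}$, and so on, and then sum the resulting rational contributions to each power of $1/n$. There is no conceptual obstacle — the hardest part is simply not making an arithmetic slip — so, as with Theorems~\ref{thm:asy-NN} and~\ref{thm:asy-ZZ}, the computation is most safely performed with a computer algebra system, and the final expansion can additionally be checked numerically against exact values of $B_n$ (see Figure~\ref{fig:ballot-numerical}).
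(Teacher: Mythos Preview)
Your proposal is correct and follows exactly the route the paper intends: the paper does not give a separate proof of the corollary, since it is immediate from the identity $B_n = 2^{n-2} q_{n-2}$ stated just before it together with the expansion~\eqref{eq:asy-q-prob}. Your explicit description of the substitution $n \mapsto n-2$ via the binomial expansion of $(n-2)^{-k}$ simply spells out the routine bookkeeping that the paper leaves implicit.
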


In Figure~\ref{fig:ballot-numerical} we compare the exact values of $B_{n}/2^{n}$ with the
values obtained from the asymptotic expansion in~\eqref{eq:ballot-asy}.

\bibliographystyle{amsplainurl}
\bibliography{bib/cheub}

\end{document}